\renewcommand{\phi}{\varphi}
\theoremstyle{plain}
\newtheorem{thm}{Theorem}
\crefname{thm}{Thm.}{Thms.}
\newtheorem{prop}{Proposition}
\crefname{prop}{Prop.}{Props.}
\theoremstyle{definition}
\theoremstyle{remark}
\title{Hausdorff reflections and bifurcate curves}
\author{John Dougherty}
\begin{document}

\maketitle

\begin{abstract}
  A manifold is a space that locally looks like the smooth space
  $\mathbf{R}^{n}$.  It is usually also assumed that the underlying topological
  space of a manifold is hausdorff.  However, there are natural examples of
  manifolds for which the hausdorff conditions fails.  Some but not all of these
  examples contain bifurcate pairs of curves: pairs of curves that agree on some
  initial interval but disagree on a later interval.  The first part of this
  note proves that a manifold $M$ is hausdorff if and only if (i) it contains no
  bifurcate curves and (ii) there is a hausdorff manifold $N$ with the same
  algebra of smooth real-valued functions as $M$; this confirms a conjecture of
  Wu and Weatherall.  The second part of this note shows that a hausdorff
  manifold $N$ satisfying (ii) is a certain quotient of $M$.
\end{abstract}

\section{Introduction}

A manifold is a topological space equipped with a structure making it locally
look like the smooth cartesian space $\mathbf{R}^{n}$.  It is usually also
assumed that the topological space is hausdorff and second countable, and these
topological conditions are necessary for structural results like embedding and
classification theorems \citep{Kolar1993,Milnor1965,Milnor1974,Whitney1936}.
However, much of the theory of smooth manifolds can be developed without these
topological conditions \citep{Bourbaki1971,Wedhorn2016}.  And there are natural
examples of locally cartesian spaces for which these topological conditions
fail, like \'etale spaces of sheaves, leaf spaces of foliations, and certain
spaces appearing in spacetime physics \citep{Haefliger1957,Hajicek1971b}.  This
note is concerned with the situation in which the hausdorff condition is not
assumed.  We therefore say ``manifold'' to mean a second countable topological
space equipped with a smooth structure, ``hausdorff manifold'' to mean a
manifold whose underlying topological space is hausdorff, and ``non-hausdorff
manifold'' to mean a manifold whose underlying topological space is not
hausdorff.

Removing the hausdorff condition allows manifolds to exhibit two properties not
found among hausdorff manifolds.  First, non-hausdorff manifolds contain points
that cannot be distinguished by any smooth real-valued function.  A topological
space $M$ is hausdorff just in case the diagonal $\Delta_{M} \subseteq M \times
M$ is closed.  So if $M$ is a non-hausdorff manifold, it contains distinct
points $p$ and $q$ such that $(p, q)$ belongs to the closure of the diagonal.
Let $\alpha : M \to \mathbf{R}$ be a \emph{functional} on $M$: that is, a smooth
real-valued function.  Any functional is continuous, and $\mathbf{R}$ is
hausdorff, so the preimage $(\alpha \times \alpha)^{-1}(\Delta_{\mathbf{R}})$ of
the diagonal is closed and contains the diagonal $\Delta_{M}$, implying that $\alpha(p) =
\alpha(q)$.  

It follows that distinct manifolds can have the same algebra of functionals,
which cannot happen in the hausdorff case.  In particular, the
$\mathbf{R}$-algebra $C^{\infty}(M)$ of functionals on a non-hausdorff manifold
$M$ might coincide with the $\mathbf{R}$-algebra $C^{\infty}(N)$ of functionals
on a hausdorff manifold $N$.
For example, let
$\mathbf{R}_{\Bumpeq}$ be the line with two origins, obtained by gluing two
copies of the real line along the open subset $\mathbf{R} \setminus \{0\}$.  A
functional on $\mathbf{R}_{\Bumpeq}$ is then a pair of functionals $(\alpha_{1},
\alpha_{2})$ on $\mathbf{R}$ that agree on $\mathbf{R} \setminus \{0\}$.  So
$(\alpha_{1}, \alpha_{2})^{-1}(\Delta_{\mathbf{R}})$ is closed and contains
$\mathbf{R}
\setminus \{0\}$, implying that $\alpha_{1} = \alpha_{2}$ and 
identifying functionals on $\mathbf{R}_{\Bumpeq}$ with functionals on $\mathbf{R}$.  

More generally, a \emph{hausdorff reflection} for a manifold $M$ is a hausdorff
manifold $N$ along with an isomorphism of $\mathbf{R}$-algebras $\phi :
C^{\infty}(N) \to C^{\infty}(M)$.  Because the embedding of manifolds into
$\mathbf{R}$-algebras is full and faithful, the hausdorff reflection of a
manifold is unique up to unique isomorphism when it exists.  Any hausdorff
manifold is naturally its own hausdorff reflection, with $\phi$ the identity.
The line with two origins shows that some non-hausdorff manifolds have a
hausdorff reflection.  On the other hand, gluing two copies of $\mathbf{R}$
along $(-\infty, 0)$ gives a non-hausdorff manifold without a hausdorff
reflection.

The second novel feature of non-hausdorff manifolds is the possibility of
bifurcate curves.  A pair 
of smooth curves $\gamma_{1}, \gamma_{2} : [0, 1] \to M$ 
in a manifold $M$ is \emph{bifurcate} if there is some
$0 < s \leq 1$ such that $\gamma_{1}(t) = \gamma_{2}(t)$ for $t < s$ and
$\gamma_{1}(t) \not= \gamma_{2}(t)$ for $s \leq t$.  If $(\gamma_{1},
\gamma_{2})$ is a bifurcate pair of curves in a manifold $M$, then $(\gamma_{1},
\gamma_{2})^{-1}(\Delta_{M}) = [0, s)$ isn't closed, so $M$ isn't hausdorff.
Some non-hausdorff manifolds admit bifurcate curves: if $i_{1}, i_{2} :
\mathbf{R} \to \mathbf{R}_{\Bumpeq}$ are the inclusions of the two copies of
$\mathbf{R}$ into the line with two origins and $\gamma : [0, 1] \to \mathbf{R}$
is the curve $\gamma(t) = t-1$, then $(i_{1}\gamma, i_{2}\gamma)$ is a bifurcate pair of curves.  But not all non-hausdorff manifolds do, as evidenced by
Misner spacetime with two extensions \citep[p.~171--174]{Hawking1973}.  

The point of this note is to relate the hausdorff condition to hausdorff
reflections and bifurcate curves.  The first main result is the following:
\begin{thm}
  \label{thm:main}
  For any manifold $M$, the following are equivalent:
  \begin{enumerate}
    \item $M$ is hausdorff
    \item $M$ has a hausdorff reflection and lacks bifurcate curves.
  \end{enumerate}
\end{thm}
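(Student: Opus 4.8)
The direction from (1) to (2) is immediate and I would dispose of it first: a hausdorff manifold is its own hausdorff reflection with $\phi$ the identity, and the excerpt has already shown that a bifurcate pair $(\gamma_1,\gamma_2)$ gives a non-closed preimage $(\gamma_1,\gamma_2)^{-1}(\Delta_M)$, so a hausdorff manifold admits none. The substance is the converse, and the engine driving it is a canonical smooth map $\eta : M \to N$ with $\eta^{*} = \phi$. I would build it pointwise: for $p \in M$ the evaluation $\mathrm{ev}_p$ is an $\mathbf{R}$-algebra homomorphism $C^{\infty}(M) \to \mathbf{R}$, so $\mathrm{ev}_p \circ \phi : C^{\infty}(N) \to \mathbf{R}$ is one too, and since $N$ is a second countable hausdorff manifold every such homomorphism is evaluation at a unique point, which I take to be $\eta(p)$. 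By construction $\beta \circ \eta = \phi(\beta)$ for every functional $\beta$ on $N$, so $\eta$ pulls functionals back to functionals; as the topology of $N$ is initial with respect to its functionals (they separate points and closed sets via bump functions), $\eta$ is continuous, hence smooth.

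Next I would reduce hausdorffness of $M$ to injectivity of $\eta$. Since $N$ is hausdorff, $\Delta_N$ is closed, so the fibre product $D := (\eta \times \eta)^{-1}(\Delta_N) = M \times_N M$ is closed in $M \times M$, and it always contains $\Delta_M$. If $\eta$ is injective then $\eta(p) = \eta(q)$ forces $p = q$, so $D = \Delta_M$; thus $\Delta_M$ is closed and $M$ is hausdorff. So it suffices to prove $\eta$ is injective, and using the hypothesis that $M$ lacks bifurcate curves it is enough to prove the contrapositive: \emph{if $\eta$ is not injective, then $M$ carries a bifurcate pair}.

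For that I first need the local structure of $\eta$. Because $\phi$ is an isomorphism, the differential $d\eta$ is invertible everywhere: a derivation at $p$ over $\mathrm{ev}_p$ composes with $\phi$ to a derivation at $\eta(p)$ over $\mathrm{ev}_{\eta(p)}$, and $\phi^{-1}$ inverts this, so $d\eta_p$ is a linear isomorphism and $\eta$ is a local diffeomorphism. Now suppose $\eta(p) = \eta(q) = y$ with $p \neq q$. I choose charts $U \ni p$ and $V \ni q$ mapping diffeomorphically onto a common $W \ni y$, with local sections $\sigma_U, \sigma_V : W \to M$, and set $E = \{\, w \in W : \sigma_U(w) = \sigma_V(w)\,\} = \eta(U \cap V)$, an open subset of $W$ with $y \notin E$. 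The key claim is that $E$ is dense in $W$, and here surjectivity of $\phi$ is essential: if some nonempty open $O \subseteq W$ avoided $\overline{E}$, then the two sheets $\sigma_U(O)$ and $\sigma_V(O)$ would be separated in $M$, and a bump function supported on the $V$-sheet over $O$ would be a functional on $M$ taking different values at $\sigma_U$ and $\sigma_V$ over a point of $O$ — impossible for a pullback $\beta \circ \eta$, which is constant on fibres of $\eta$.

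Finally I would plant the curve. With $E$ dense and $y \in Z := W \setminus E$, I pick $e \in E$ near $y$ and let $z^{*}$ be a nearest point of $Z$ to $e$; the open segment $[e, z^{*})$ lies in $E$, because the ball of radius $|e - z^{*}|$ about $e$ avoids $Z$, and it terminates at $z^{*} \in Z$. Reparametrising this segment by a smooth $\lambda$ that reaches its endpoint with all derivatives vanishing and is constant thereafter yields a smooth $c : [0,1] \to W$ with $c(t) \in E$ for $t < s$ and $c(t) = z^{*} \notin E$ for $t \geq s$; then $\gamma_1 = \sigma_U \circ c$ and $\gamma_2 = \sigma_V \circ c$ agree for $t < s$ and differ for $t \geq s$, a bifurcate pair, completing the contrapositive. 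I expect this last step to be the main obstacle, for a subtle reason: a non-separated fibre need not itself be smoothly accessible — a doubled accumulation point can admit no bifurcate curve at all — so one cannot naively build the curve at the given $y$. The resolution is that density of $E$, together with the nearest-point construction, guarantees that \emph{some} doubled point $z^{*}$ is reachable by a straight segment lying in $E$, and that is where the bifurcation must be located; establishing the density of $E$ from surjectivity of $\phi$ is the other delicate point.
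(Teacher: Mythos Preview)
Your construction of $\eta$, the local-diffeomorphism step, and the nearest-point curve construction are all sound and match the paper's argument closely. The gap is in the density claim for $E = \eta(U\cap V)$, and it stems from your choice of starting data.

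You pick $p,q$ with $\eta(p)=\eta(q)$ and then argue that if some open $O\subseteq W$ missed $\overline{E}$, a bump function supported on $\sigma_V(O)$ would extend by zero to a functional on $M$ distinguishing the two sheets. But extension by zero is exactly what fails on a non-hausdorff manifold: nothing rules out a point $z\in M\setminus V$ lying in the closure of $\sigma_V(\operatorname{supp} g)$, and at such a $z$ your extended function is not even continuous. You correctly show this cannot happen for $z\in U$, but $M$ may have other sheets over $O$ besides $\sigma_U(O)$ and $\sigma_V(O)$, and for those you have no control. So the bump function need not exist, and the contradiction does not close. (Note too that your curve construction only uses $E\neq\emptyset$, not density, so you are trying to prove more than you need.)

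The paper avoids this entirely by choosing the pair differently: instead of arbitrary $p,q$ in the same $\eta$-fiber, it takes $(p_1,p_2)$ in the closure of the diagonal $\overline{\Delta_M}$, which exists precisely because $M$ is non-hausdorff. Then $U_1\times U_2$ is a neighborhood of $(p_1,p_2)$ and therefore meets $\Delta_M$, giving a point of $U_1\cap U_2$ for free; no bump functions, no density argument. Since $U_1$ is hausdorff it cannot contain $p_2$, so $\eta(U_1\cap U_2)$ is a nonempty proper open subset of $W$, and your nearest-point curve construction (or the paper's boundary-curve version) finishes the job. The fix to your argument is thus a one-line change in the choice of $p,q$; the detour through injectivity of $\eta$ and density of $E$ is unnecessary.
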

\noindent
We have already seen that the forward direction holds.  For the backward
direction, we show that if $M$ is a non-hausdorff manifold with a hausdorff
reflection $N$, then some curves in $N$ admit multiple lifts to smooth curves in
$M$ that agree on an initial interval.  This proves a conjecture of
Wu and Weatherall
\citep{Wu2023}, which inspired this note.

The equivalence in \cref{thm:main} is most interesting in cases where we know
whether $M$ satisfies two of the three conditions appearing in it.  One
generally knows whether some manifold of interest is hausdorff.  H\'aji\v{c}ek
\citep{Hajicek1971a} gives a criterion for gluings of hausdorff manifolds to
produce bifurcate curves and produces examples of non-hausdorff manifolds
without bifurcate curves. It follows from \cref{thm:main} that these manifolds
have no hausdorff reflections.  In the second part of this note we show that the
hausdorff reflection coincides with a certain quotient.
\begin{thm}
  \label{thm:quotient}
  Let $M$ be a manifold, and let $E \subseteq M \times M$ be the set of pairs
  $(p, q)$ such that $\alpha(p) = \alpha(q)$ for all functionals $\alpha$ on
  $M$.  If the quotient manifold $\eta : M \to M/E$ exists, then $\eta^{*} :
  C^{\infty}(M/E) \to C^{\infty}(M)$ is a hausdorff reflection for $M$.
  Conversely, every hausdorff reflection is of this form.
\end{thm}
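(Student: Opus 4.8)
The plan is to prove the two directions separately, identifying throughout a point of a (possibly non-Hausdorff) manifold with a character or maximal ideal of its function algebra.

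\emph{Forward direction.} Assume the quotient manifold $\eta\colon M\to M/E$ exists. First I would check that $\eta^{*}$ is an algebra isomorphism. Injectivity is immediate from surjectivity of $\eta$. For surjectivity, every functional $\alpha$ on $M$ is by definition of $E$ constant on $E$-classes, so it descends to a set map $\bar\alpha$ on $M/E$ with $\bar\alpha\circ\eta=\alpha$; since $\eta$ is the quotient submersion, the defining property of the quotient smooth structure makes $\bar\alpha$ smooth, giving $\alpha=\eta^{*}\bar\alpha$. Next I would show $M/E$ is Hausdorff: if $[p]\neq[q]$ then $(p,q)\notin E$, so some functional $\alpha$ has $\alpha(p)\neq\alpha(q)$, whence $\bar\alpha$ separates $[p]$ and $[q]$; thus functionals separate the points of the locally Euclidean space $M/E$, and pulling back disjoint neighborhoods in $\mathbf{R}$ produces disjoint neighborhoods, so $M/E$ is Hausdorff. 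Hence $(M/E,\eta^{*})$ is a Hausdorff reflection.

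\emph{Converse.} Let $(N,\phi)$ be a Hausdorff reflection. Because maps into a Hausdorff manifold are determined algebraically (the full-and-faithfulness recorded above), $\phi=f^{*}$ for a unique smooth $f\colon M\to N$. The first key step is that $f$ is a local diffeomorphism: the isomorphism $\phi$ carries the maximal ideal $\mathfrak{m}_{f(p)}$ of functions vanishing at $f(p)$ onto $\mathfrak{m}_{p}$, hence induces an isomorphism $\mathfrak{m}_{f(p)}/\mathfrak{m}_{f(p)}^{2}\to\mathfrak{m}_{p}/\mathfrak{m}_{p}^{2}$ on cotangent spaces; as this isomorphism is the transpose of $df_{p}$, each $df_{p}$ is invertible. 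Here I would check that $\mathfrak{m}_{p}/\mathfrak{m}_{p}^{2}$ still computes $T^{*}_{p}M$ when $M$ is non-Hausdorff, by realizing coordinate differentials through globally defined functionals supported in a chart. The second key step identifies the fibers: since $N$ is Hausdorff its functionals separate points, so $f(p)=f(q)$ iff $(\phi\beta)(p)=(\phi\beta)(q)$ for every $\beta$, and because $\phi$ is surjective this says exactly that $\alpha(p)=\alpha(q)$ for all functionals $\alpha$ on $M$, i.e.\ $(p,q)\in E$.

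It remains to show $f$ is surjective, after which $f$ is a surjective submersion whose fibers are precisely the $E$-classes, so $(N,f)$ satisfies the universal property of the quotient manifold; this shows $M/E$ exists, that $f$ agrees with $\eta$ up to the unique isomorphism, and that $\phi=\eta^{*}$. For surjectivity I would invoke smooth real-compactness: for a second-countable manifold every $\mathbf{R}$-algebra homomorphism $C^{\infty}(\blank)\to\mathbf{R}$ is evaluation at a point. Applied to $N$ it identifies the characters of $C^{\infty}(N)$ with the points of $N$, and transporting along $\phi$ identifies the characters of $C^{\infty}(M)$ with $N$ as well; applied to $M$ it identifies those same characters with $\{\mathrm{ev}_{p}:p\in M\}$, i.e.\ with $M/E$ as a set. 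The composite bijection sends $[p]$ to $f(p)$, so it is $\bar f$, and therefore $f$ is onto. I expect the main obstacle to be this surjectivity step, since smooth real-compactness is the one ingredient that genuinely uses second countability and must be secured for non-Hausdorff $M$; the cotangent-space computation underlying the local-diffeomorphism step is the main differential-geometric point but is comparatively routine.
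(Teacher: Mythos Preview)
Your forward direction and the paper's are essentially the same: the universal property of the quotient makes $\eta^{*}$ bijective, and separation of $E$-classes by functionals makes $M/E$ Hausdorff (the paper packages this as ``$E$ is closed'' and cites the Bourbaki criterion).

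For the converse you take a genuinely different route. The paper passes to Fr\"olicher spaces, where the Hausdorff reflection $LFM$ always exists as the set-level quotient $M/E$; it then compares two colimit presentations to show $LFM\cong N$, and finally uses the local-diffeomorphism property (its \cref{prop:local_diffeo}) to conclude that $\eta$ is a submersion and hence the manifold quotient. Your approach stays inside manifolds: you recover $f$ from $\phi$, establish the local-diffeomorphism and fiber-identification steps directly (paralleling the paper's \cref{prop:Coo_embedding,prop:local_diffeo}), and then try to deduce surjectivity. Your route avoids building the Fr\"olicher machinery; the paper's route explains structurally why the reflection is a quotient and sidesteps the surjectivity problem entirely.

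That problem is the real gap in your proposal. You invoke Milnor's exercise for the non-Hausdorff manifold $M$, but the standard proofs rely on extending bump functions by zero or on proper exhaustion functions, and in a non-Hausdorff manifold compact sets need not be closed, so these constructions can fail. Worse, in the present situation the statement ``every character of $C^{\infty}(M)$ is some $\operatorname{ev}_{p}$'' becomes, after transporting along $\phi$, literally the statement that $f$ is onto; without an independent proof for non-Hausdorff $M$ the argument is circular. The paper's Fr\"olicher detour is one way out. A lighter fix is available from what you have already shown: $f(M)$ is open (local diffeomorphism) and dense (injectivity of $f^{*}$), and your fiber identification implies that $\alpha\mapsto\bar\alpha$ is an isomorphism $C^{\infty}(M)\cong C^{\infty}(f(M))$; composing with $f^{*}$ shows that restriction $C^{\infty}(N)\to C^{\infty}(f(M))$ is an isomorphism, which is impossible if some $q\in N\setminus f(M)$ exists, since a bumped $1/\lvert x-q\rvert^{2}$ lies in $C^{\infty}(f(M))$ but cannot extend smoothly across $q$.
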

\noindent
It follows that the hausdorff reflection exists if and only if $E$ is a
submanifold of $M \times M$ and the projection $E \to M$ onto the first factor
is a submersion \citep[\S5.9.5]{Bourbaki1971}.  The forward direction follows
from the universal property of the quotient.  To prove the converse, we move to
a context more general than manifolds in which the hausdorff reflection and
quotient always exist and coincide.

\section{The hausdorff reflection as a manifold}

A hausdorff reflection for a manifold $M$ is an isomorphism of
$\mathbf{R}$-algebras $\phi : C^{\infty}(N) \to C^{\infty}(M)$ with $N$
hausdorff.  This definition is somewhat inconvenient because it involves data in
two categories: manifolds and $\mathbf{R}$-algebras.  When $M$ and $N$ are both
hausdorff, the full and faithful embedding of hausdorff manifolds in
$\mathbf{R}$-algebras implies that an isomorphism of $\mathbf{R}$-algebras
$C^{\infty}(N) \to C^{\infty}(M)$ is the same thing as a diffeomorphism $M \to
N$, resolving this inconvenience.  But when we drop the hausdorff assumption,
this no longer holds; this is what allowed for nontrivial hausdorff reflections
in the first place.  However, as this section shows, because $N$ is hausdorff we
can recover enough manifold data about the reflection to prove \cref{thm:main}.
  
For any manifold $M$, the set $C^{\infty}(M)$ of functionals on $M$ is an
$\mathbf{R}$-algebra when endowed with pointwise addition, multiplication, and
multiplication by scalars.  And pullback along any smooth function $f : M \to N$
gives an $\mathbf{R}$-algebra homomorphism $f^{*} : C^{\infty}(N) \to
C^{\infty}(M)$.  Since pullback commutes with composition, this defines a
functor
\[
  C^{\infty} :
  \mathrm{Man}^{\text{op}}
  \to
  \mathrm{Alg}_{\mathbf{R}}
\]
from the category of manifolds and smooth functions to the category of
$\mathbf{R}$-algebras and $\mathbf{R}$-algebra homomorphisms.  

When restricted to the full subcategory $\mathrm{HMan}$ of $\mathrm{Man}$ on the
hausdorff manifolds, this functor has two salient properties.  First, if $f : M
\to N$ is a function on the underlying sets of hausdorff manifolds $M$ and $N$
such that $f^{*} : C^{\infty}(N) \to C^{\infty}(M)$ is an $\mathbf{R}$-algebra
homomorphism, then $f : M \to N$ is smooth.  Second, for any hausdorff manifold
$N$ the map
\[
  \operatorname{ev} : N \to \mathrm{Alg}_{\mathbf{R}}(C^{\infty}(N), \mathbf{R})
  \qquad\qquad
  \operatorname{ev}_{q}(\alpha)
  =
  \alpha(q)
\]
is a bijection, a result sometimes called ``Milnor's exercise''
\citep[\S35.9]{Kolar1993}.  It follows from these two properties that the
restriction of the functor $C^{\infty}$ to $\mathrm{HMan}$ is full and faithful.  

Neither of these two conditions is true for non-hausdorff manifolds.  For
example, consider again the inclusions $i_{1}, i_{2} : \mathbf{R} \to
\mathbf{R}_{\Bumpeq}$ of the two copies of $\mathbf{R}$ into the line with two
origins.  The curve $\gamma : \mathbf{R} \to \mathbf{R}_{\Bumpeq}$ satisfying
$\gamma(t) = i_{1}(0)$ for $t \leq 0$ and $\gamma(t) = i_{2}(0)$ otherwise
composes with any functional to give a smooth constant map, so $\gamma^{*} :
C^{\infty}(\mathbf{R}_{\Bumpeq}) \to C^{\infty}(\mathbf{R})$ is an
$\mathbf{R}$-algebra homomorphism, but $\gamma$ isn't smooth; indeed, it's not
even continuous, because the preimage of any cartesian open neighborhood $U$ of
$i_{1}(0)$ is $\gamma^{-1}(U) = (-\infty, 0]$.  And for any functional $\alpha$
on $\mathbf{R}_{\Bumpeq}$ we have $i_{1}^{*}\alpha = i_{2}^{*}\alpha$ by the
argument in the introduction, meaning that $C^{\infty}$ isn't faithful.  

However, examining the proofs of these results shows that they still go through
when the codomain is hausdorff.
\begin{prop}
  \label{prop:Coo_reflect_smoothness}
  For any manifolds $M$ and $N$, if $N$ is hausdorff then the square
  \[
    \begin{tikzcd}
      {\mathrm{Man}(M, N)} \ar[r, hook] \ar[d ] 
      \ar[dr, phantom, "\lrcorner" very near start]
      &
      {\mathrm{Set}(M, N)} \ar[d] \\
      {\mathrm{Alg}_{\mathbf{R}}(C^{\infty}(N), C^{\infty}(M))} \ar[r, hook] &
      {\mathrm{Set}(
        C^{\infty}(N),
        \mathrm{Set}(M, \mathbf{R})
      )}
    \end{tikzcd}
  \]
  is a pullback.
\end{prop}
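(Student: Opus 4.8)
The plan is to show that the canonical comparison map from $\mathrm{Man}(M,N)$ into the pullback of the cospan is a bijection, and then to isolate what that actually requires. First I would observe that all four maps in the square are injective. The top horizontal map is the inclusion of smooth maps into set maps, and the bottom horizontal map is injective because an algebra homomorphism is determined by its underlying set map and $C^{\infty}(M) \hookrightarrow \mathrm{Set}(M, \mathbf{R})$ is injective. The right-hand vertical map $f \mapsto (\alpha \mapsto \alpha \circ f)$ and the left-hand vertical map $f \mapsto f^{*}$ are injective because functionals separate the points of the hausdorff manifold $N$ (the injectivity half of Milnor's exercise): if $\alpha \circ f = \alpha \circ g$ for every $\alpha \in C^{\infty}(N)$, then $f$ and $g$ agree pointwise. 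Because every map is injective, the comparison map is automatically injective and uniqueness in the pullback is free, so the square is a pullback precisely when the image of $\mathrm{Man}(M,N)$ in $\mathrm{Set}(C^{\infty}(N), \mathrm{Set}(M, \mathbf{R}))$ equals the intersection of the images of the other two corners.

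Next I would unwind this intersection condition into a statement about a single function. An element of the pullback is a set map $f : M \to N$ whose associated map $\alpha \mapsto \alpha \circ f$ both lands in $C^{\infty}(M)$ and is an algebra homomorphism; but since the algebra operations on $C^{\infty}(N)$ and $C^{\infty}(M)$ are pointwise, once each $\alpha \circ f$ is smooth the assignment $\alpha \mapsto \alpha \circ f$ is automatically an $\mathbf{R}$-algebra homomorphism. So the only nontrivial thing to prove is surjectivity of the comparison map, which amounts to the following smoothness-recognition claim: \emph{if $f : M \to N$ is a set function such that $\alpha \circ f \in C^{\infty}(M)$ for every functional $\alpha$ on $N$, then $f$ is smooth}. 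This is exactly the first salient property quoted above, generalized to a possibly non-hausdorff domain $M$.

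I would prove the claim in two steps, continuity and then smoothness, using that $N$, being a hausdorff manifold, admits bump functions. For continuity at $p \in M$ with $q = f(p)$ and a neighborhood $W \ni q$, choose $\beta \in C^{\infty}(N)$ with $\beta(q) > 0$ and support inside $W$; then $\beta \circ f$ is smooth, hence continuous, so $(\beta \circ f)^{-1}(0, \infty)$ is an open neighborhood of $p$ mapped by $f$ into $\{\beta > 0\} \subseteq W$. For smoothness, pick a chart $(V, \psi)$ about $q$ with coordinate functionals $\psi^{1}, \dots, \psi^{n}$ and a bump function $\lambda$ equal to $1$ on an open set $O \ni q$ with support in $V$; the products $\lambda \psi^{i}$, extended by zero, are global functionals on $N$ agreeing with $\psi^{i}$ on $O$. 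By hypothesis each $(\lambda \psi^{i}) \circ f$ is smooth on $M$, and on the open set $f^{-1}(O)$ (open by continuity) it agrees with $\psi^{i} \circ f$; hence the coordinate representation $(\psi^{1} \circ f, \dots, \psi^{n} \circ f)$ of $f$ is smooth near $p$, so $f$ is smooth.

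The main obstacle is this last step, and it is worth flagging exactly why the hausdorff hypothesis on the codomain is what makes everything work while the possible non-hausdorffness of $M$ is harmless. Coordinate functions on $N$ are only locally defined, so to feed them into the hypothesis I must first extend them to global functionals, and this is precisely where the bump functions---available because $N$ is a hausdorff manifold---are used; the same hausdorffness supplies the separating functionals needed for continuity and for the injectivity of the vertical maps. By contrast, the argument is entirely local on $M$ and only ever uses that pullbacks along $f$ of smooth functions are smooth, so no separation property of $M$ is required. This is what lets the hausdorff-to-hausdorff proof go through verbatim once the codomain alone is assumed hausdorff.
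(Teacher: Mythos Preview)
Your proof is correct and follows essentially the same approach as the paper: reduce the pullback statement to the claim that a set map $f : M \to N$ with $\alpha \circ f$ smooth for every functional $\alpha$ is itself smooth, then prove continuity and smoothness via bump functions on the hausdorff codomain $N$. Your write-up is a bit more explicit about why the pullback condition reduces to this single smoothness claim (injectivity of all four arrows, the algebra-homomorphism condition being automatic), but the substance of the argument is the same.
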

\begin{proof}
  Since the top arrow is an injection, it suffices to show that any set function
  $f : M \to N$ that induces an $\mathbf{R}$-algebra homomorphism $f^{*} :
  C^{\infty}(N) \to C^{\infty}(M)$ is smooth.  Let $f$ be such a function.

  For any an open subset $V$ of $N$ and any $p$ in $f^{-1}(V)$, the fact that
  $N$ is hausdorff means we can use the usual partition of unity argument to
  construct a functional $\alpha : N \to \mathbf{R}$ that vanishes outside of
  $V$ and satisfies $\alpha(f(p)) = 1$.  Since $f^{*}\alpha$ is a functional on
  $M$, the set $(f^{*}\alpha)^{-1}(\mathbf{R} \setminus \{0\})$ is an open
  neighborhood of $p$ contained in $f^{-1}(V)$.  Therefore $f$ is continuous.

  For any point $p$ of $M$, choose a chart $(V, y)$ around $f(p)$.  For any
  coordinate $y^{i}$ on $V$, the fact that $N$ is hausdorff means we can use the
  usual partition of unity argument to construct a functional $\alpha$ on $N$
  that agrees with $y^{i}$ on an open neighborhood $V_{0} \subseteq V$ of
  $f(p)$.  Then $f^{*}\alpha$ is smooth by hypothesis and agrees with $y^{i}
  \cdot f$ on $f^{-1}(V_{0})$.  Since $f$ is continuous, this means that $y^{i}
  \cdot f$ is smooth on an open neighborhood of $p$ and thus that $f$ is smooth
  at $p$.
\end{proof}

\begin{prop}
  \label{prop:Coo_embedding}
  For any manifolds $M$ and $N$, if $N$ is hausdorff then the map
  \[
    \mathrm{Man}(M, N)
    \to
    \mathrm{Alg}_{\mathbf{R}}(C^{\infty}(N), C^{\infty}(M))
  \]
  is a bijection.
\end{prop}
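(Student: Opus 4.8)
The plan is to deduce this from \cref{prop:Coo_reflect_smoothness} together with Milnor's exercise. By that proposition, $\mathrm{Man}(M, N)$ is the pullback of the cospan
\[
  \mathrm{Set}(M, N)
  \to
  \mathrm{Set}(C^{\infty}(N), \mathrm{Set}(M, \mathbf{R}))
  \leftarrow
  \mathrm{Alg}_{\mathbf{R}}(C^{\infty}(N), C^{\infty}(M)),
\]
and the map in question is the left-hand projection out of this pullback. A projection out of a pullback of sets is a bijection exactly when each of its fibres is a singleton, so it suffices to show that for every $\mathbf{R}$-algebra homomorphism $\phi : C^{\infty}(N) \to C^{\infty}(M)$ there is a unique set function $f : M \to N$ with $\alpha(f(p)) = \phi(\alpha)(p)$ for all functionals $\alpha$ on $N$ and all points $p$ of $M$. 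This is precisely the condition that $f$ and $\phi$ agree when pushed into $\mathrm{Set}(C^{\infty}(N), \mathrm{Set}(M, \mathbf{R}))$.

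Fix such a $\phi$ and a point $p$ of $M$. Evaluation at $p$ is an $\mathbf{R}$-algebra homomorphism $\operatorname{ev}_{p} : C^{\infty}(M) \to \mathbf{R}$, so the composite $\operatorname{ev}_{p} \circ \phi : C^{\infty}(N) \to \mathbf{R}$ is again an $\mathbf{R}$-algebra homomorphism, sending each $\alpha$ to $\phi(\alpha)(p)$. Since $N$ is hausdorff, Milnor's exercise tells us that $\operatorname{ev} : N \to \mathrm{Alg}_{\mathbf{R}}(C^{\infty}(N), \mathbf{R})$ is a bijection. Its surjectivity supplies a point $f(p)$ of $N$ with $\alpha(f(p)) = \phi(\alpha)(p)$ for every $\alpha$, and its injectivity shows that $f(p)$ is the unique such point. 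Letting $p$ range over $M$ defines the required function $f : M \to N$, and the pointwise uniqueness of each $f(p)$ gives uniqueness of $f$ itself.

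Finally, since the pair $(f, \phi)$ agrees in the bottom-right corner of the square, the pullback property of \cref{prop:Coo_reflect_smoothness} produces a smooth map in $\mathrm{Man}(M, N)$ whose underlying set function is $f$ and whose induced homomorphism is $\phi$; in particular $f^{*} = \phi$, so the projection is surjective, while the uniqueness above gives injectivity. The only genuine input is Milnor's exercise, and I expect no real obstacle here: once $\operatorname{ev}_{p} \circ \phi$ is recognized as an $\mathbf{R}$-algebra homomorphism, surjectivity of $\operatorname{ev}$ yields existence and injectivity yields uniqueness, so the entire content of the proposition is packaged into the bijectivity of $\operatorname{ev}$ for the hausdorff manifold $N$.
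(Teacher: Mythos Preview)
Your proof is correct and follows essentially the same route as the paper: both arguments combine \cref{prop:Coo_reflect_smoothness} with Milnor's exercise, defining $f(p)$ as the point of $N$ corresponding to $\operatorname{ev}_{p}\circ\phi$ and then invoking the pullback square to obtain smoothness. The only cosmetic differences are that the paper handles injectivity by a direct partition-of-unity separation (equivalent to the injectivity half of Milnor's exercise you cite), and that in your cospan the projection to $\mathrm{Alg}_{\mathbf{R}}(C^{\infty}(N), C^{\infty}(M))$ is the right-hand one, not the left.
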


\begin{proof}
  If $f, g : M \to N$ are distinct set functions, then there's some $p$ in $M$
  at which $f$ and $g$ disagree.  Since $N$ is hausdorff, we can use the usual
  partition of unity argument to construct a functional $\alpha$ on $N$ such
  that $\alpha(f(p)) = 1$ and $\alpha(g(p)) = 0$.  So $f^{*}\alpha$ and
  $g^{*}\alpha$ are distinct, making the right leg of the pullback in
  \cref{prop:Coo_reflect_smoothness} an injection.  Since pullbacks preserve
  monos, the left leg is an injection as well.

  For surjectivity, consider any
  $\mathbf{R}$-algebra homomorphism $\phi : C^{\infty}(N) \to C^{\infty}(M)$.
  Since $N$ is hausdorff, the full and faithful embedding of hausdorff manifolds
  into $\mathbf{R}$-algebras gives a map
  \[
    f
    :
    M
    \to
    \mathrm{Alg}_{\mathbf{R}}(C^{\infty}(N), \mathbf{R})
    \cong
    N
    \qquad\qquad
    f(p) = \mathrm{ev}_{p} \cdot \phi
  \]
  For any functional $\alpha$ on $N$ and point $p$ of $M$ we then have
  $\alpha(f(p)) = \phi(\alpha)(p)$, from which it follows that $f^{*} = \phi$,
  making $f$ smooth by \cref{prop:Coo_reflect_smoothness} and the map in the
  statement a bijection.
\end{proof}

In light of these results, a hausdorff reflection for a manifold $M$ is a smooth
function $\eta : M \to N$ such that $\eta^{*} : C^{\infty}(N) \to C^{\infty}(M)$
is an isomorphism of $\mathbf{R}$-algebras.  When $M$ is hausdorff, the fact
that $C^{\infty}$ is full and faithful on hausdorff manifolds implies that
$\eta$ is a diffeomorphism.  When $M$ is non-hausdorff, the argument from the
introduction shows that $\eta$ must identify any pair belonging to the closure
of the diagonal, since any map into a hausdorff space will, and so
$\eta$ isn't injective.  But it is still a diffeomorphism locally, and this
suffices to prove \cref{thm:main}:
\begin{prop}
  \label{prop:local_diffeo}
  Let $\eta : M \to N$ be a smooth function of manifolds such that $\eta^{*} :
  C^{\infty}(N) \to C^{\infty}(M)$ is an isomorphism of $\mathbf{R}$-algebras.
  Each point $p$ of $M$ has an open neighborhood $U$ such that
  $\eta\rvert_{U} : U \to \eta(U)$ is a diffeomorphism.
\end{prop}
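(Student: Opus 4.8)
The plan is to show that the differential $d\eta_p \colon T_pM \to T_{\eta(p)}N$ is a linear isomorphism at every point $p$, and then to invoke the inverse function theorem. The inverse function theorem is a statement about a smooth map read in charts, so it is insensitive to whether $M$ or $N$ is hausdorff: applied at $p$ it produces an open $U \ni p$ on which $\eta$ restricts to a diffeomorphism onto an \emph{open} subset $\eta(U)$ of $N$. Thus the whole proposition reduces to the pointwise invertibility of $d\eta_p$.

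To analyze $d\eta_p$ I would pass to cotangent spaces and exploit the algebra isomorphism. The key identity is $\operatorname{ev}_p \circ \eta^{*} = \operatorname{ev}_{\eta(p)}$, which shows that $\eta^{*}$ carries the maximal ideal $\mathfrak{m}_{\eta(p)} = \ker \operatorname{ev}_{\eta(p)}$ isomorphically onto $\mathfrak{m}_p = \ker \operatorname{ev}_p$, and hence induces an isomorphism of $\mathbf{R}$-vector spaces $\mathfrak{m}_{\eta(p)}/\mathfrak{m}_{\eta(p)}^{2} \cong \mathfrak{m}_p/\mathfrak{m}_p^{2}$. On each side there is the natural comparison map $[\beta] \mapsto d\beta_p$ to the honest cotangent space, and these are compatible with $d\eta_p^{*}$ and with the map induced by $\eta^{*}$, giving a commutative square. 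For the hausdorff manifold $N$ the comparison map $\mathfrak{m}_{\eta(p)}/\mathfrak{m}_{\eta(p)}^{2} \to T^{*}_{\eta(p)}N$ is an isomorphism by the standard bump-function and Hadamard arguments. If the analogous comparison map $\mathfrak{m}_p/\mathfrak{m}_p^{2} \to T^{*}_p M$ is also an isomorphism, then chasing the square forces $d\eta_p^{*}$, and therefore $d\eta_p$, to be an isomorphism (in particular $\dim M = \dim N$), which is exactly what is needed.

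So the crux — and the step I expect to be the main obstacle — is the purely local claim that for the possibly non-hausdorff manifold $M$ the comparison map $\mathfrak{m}_p/\mathfrak{m}_p^{2} \to T^{*}_p M$ is an isomorphism; equivalently, that the differentials of global functionals span $T^{*}_p M$ and that a functional vanishing to second order at $p$ lies in $\mathfrak{m}_p^{2}$. Both halves are routine for hausdorff manifolds but genuinely delicate here: a bump function supported in a chart around $p$ need not have closed support in $M$, and functionals are forced to agree at non-separated pairs, so one cannot separate a point from its non-hausdorff twins. The plan is to work in a chart $(W, x)$ centered at $p$ and assemble the needed functionals from the coordinates $x^{i}$ and a bump $\chi$; the favorable point is that because $x^{i}$ vanishes at $p$, the products $\chi x^{i}$ stand a chance of extending smoothly by zero across twins, and a parallel Hadamard factorization handles the second-order claim, with second countability used to control the global assembly. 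Verifying that these extensions are in fact smooth at every non-separated twin — rather than merely continuous on the chart — is where the non-hausdorff hypothesis bites hardest, and is the technical heart of the argument; once it is in hand, the cotangent-space square closes and the inverse function theorem finishes the proof.
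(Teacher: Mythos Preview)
Your overall plan---show that $d\eta_p$ is an isomorphism and then invoke the inverse function theorem---is exactly the paper's. The paper dispatches the first step in one line by asserting that $T_pM$ is the space of derivations on $C^{\infty}(M)$ at $p$, so that the algebra isomorphism $\eta^{*}$ immediately gives an isomorphism $\eta_{*}\colon T_pM \to T_{\eta(p)}N$; the inverse function theorem, applied in charts, then finishes. Your cotangent square is the dual of this same argument, and you are right that the identification $\mathfrak m_p/\mathfrak m_p^{2}\cong T^{*}_pM$ for possibly non-hausdorff $M$ is the only nontrivial ingredient. The paper simply treats it as known rather than proving it.

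Where your proposal runs into trouble is in the attempt to prove that identification by extending $\chi x^{i}$ by zero. This fails already in the paper's own example of two copies of $\mathbf R$ glued along $(-\infty,0)$: with $p=0_{1}$ and $\chi$ a bump satisfying $\chi(0)=1$, the extension of $\chi(t)\,t$, read in the chart around the twin $0_{2}$, equals $\chi(t)\,t$ for $t<0$ and $0$ for $t\ge 0$, and the one-sided derivatives at $0$ are $1$ and $0$. Vanishing of $x^{i}$ at $p$ buys you continuity across a twin but not differentiability; for a smooth extension by zero you would need vanishing to infinite order, which defeats the purpose since then $d(\chi x^{i})_p=0$. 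Notice also that the lemma you are trying to prove is stated for \emph{arbitrary} non-hausdorff $M$ and makes no use of the hypothesis that $\eta^{*}$ is an isomorphism---so either cite the derivation description of $T_pM$ as the paper does, or look for an argument that actually exploits that hypothesis (pulling back bump functions from $N$ rather than building them on $M$).
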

\begin{proof}
  At any point $p$, the tangent space $T_{p}M$ is the vector space of
  derivations on $C^{\infty}(M)$ at $p$, so the isomorphism $\eta^{*} :
  C^{\infty}(N) \to C^{\infty}(M)$ induces an isomorphism $\eta_{*} : T_{p}M \to
  T_{\eta(p)}N$.  Choosing a chart $V$ around $\eta(p)$ and a chart $U$ around
  $p$ with $U \subseteq \eta^{-1}(V)$, the inverse function theorem applied to
  $(\eta\rvert_{U})_{*} : T_{p}U \to T_{\eta(p)}V$ supplies an open neighborhood
  $U_{0} \subseteq U$ of $p$ such that $\eta\rvert_{U_{0}} : U_{0} \to
  \eta(U_{0})$ is a diffeomorphism.
\end{proof}

\begin{proof}[Proof (\cref{thm:main})]
  Suppose that $M$ is a non-hausdorff manifold and $\eta : M \to N$ its
  hausdorff reflection.  Since $M$ isn't hausdorff, it contains distinct points
  $p_{1}$ and $p_{2}$ such that $(p_{1}, p_{2})$ belongs to the closure of the
  diagonal in $M \times M$.  Since $\eta$ is a local diffeomorphism, we can
  choose cartesian open neighborhoods $U_{1}$ of $p_{1}$ and $U_{2}$ of $p_{2}$
  and a cartesian open neighborhood $V$ of $\eta(p_{1}) = \eta(p_{2})$ such that
  the restrictions $\eta_{1} : U_{1} \to V$ and $\eta_{2} : U_{2}
  \to V$ of $\eta$ are diffeomorphisms.  On $U = U_{1} \cap U_{2}$ the
  restrictions of $\eta_{1}$ and $\eta_{2}$ are both the restriction
  $\eta\rvert_{U}$, so $\eta_{1}^{-1}$ and $\eta_{2}^{-1}$ coincide on
  $\eta(U)$; elsewhere they differ.

  Now $U_{1} \times U_{2}$ is an open neighborhood of $(p_{1}, p_{2})$ in $M
  \times M$, and since $(p_{1}, p_{2})$ belongs to the closure of the diagonal
  it follows that $U_{1} \times U_{2}$ meets the diagonal, giving some point $q$
  in $U$.  And since $U_{1}$ is hausdorff and contains $p_{1}$, it can't contain
  $p_{2}$.  Therefore $\eta(U)$ is an inhabited proper open subset of the
  cartesian space $V$.  So we can choose a smooth curve $\gamma : [0, 1] \to V$
  such that $\gamma(t)$ belongs to $\eta(U)$ for all $t < 1$ and $\gamma(1)$ is
  on the boundary of $\eta(U)$.  From this we obtain smooth curves $\gamma_{1} =
  \eta_{1}^{-1} \cdot \gamma$ and $\gamma_{2} = \eta_{2}^{-1} \cdot \gamma$ in
  $M$.  Since $\eta_{1}^{-1}$ and $\eta_{2}^{-1}$ agree on $\eta(U)$ we have
  $\gamma_{1}(t) = \gamma_{2}(t)$ for $t < 1$, and since $\eta_{1}^{-1}$ and
  $\eta_{2}^{-1}$ differ on the complement of $\eta(U)$ we have $\gamma_{1}(1)
  \not= \gamma_{2}(1)$.  So $(\gamma_{1}, \gamma_{2})$ is a bifurcate pair of
  curves in $M$.  
\end{proof}

\section{The hausdorff reflection as a quotient}

The results of the previous section suggest that we can think of a hausdorff
reflection $\eta : M \to N$ as the quotient identifying those points that cannot
be distinguished by functionals.  When the quotient exists, a short argument
shows that this is indeed the case.  If we don't assume that the quotient
exists, it's still possible to characterize the hausdorff reflection as a
colimit determined by $M$.

One direction of \cref{thm:quotient} follows from the universal property of the
quotient:
\begin{prop}[\cref{thm:quotient}, forward]
  If the quotient $\eta : M \to M/E$ exists, then it is a hausdorff reflection
  for $M$.
\end{prop}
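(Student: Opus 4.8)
The plan is to use the characterization established in the discussion after \cref{prop:Coo_embedding}: once $M/E$ is known to be hausdorff, exhibiting $\eta : M \to M/E$ as a hausdorff reflection amounts to checking just two things, namely that $M/E$ is hausdorff and that the pullback $\eta^{*} : C^{\infty}(M/E) \to C^{\infty}(M)$ is an isomorphism of $\mathbf{R}$-algebras. The whole argument rests on a single observation about the interplay between the definition of $E$ and the universal property of the quotient. On the one hand, because $M/E$ is the quotient of $M$ by the equivalence relation $E$, the map $\eta$ satisfies $\eta(p) = \eta(q)$ exactly when $(p, q) \in E$. On the other hand, by the very definition of $E$, every functional $\alpha$ on $M$ takes equal values on $E$-related points and so is constant on the fibers of $\eta$. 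The universal property of the quotient submersion then lets each such $\alpha$ descend to a unique smooth $\bar{\alpha}$ on $M/E$ with $\eta^{*}\bar{\alpha} = \alpha$.

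From this observation the isomorphism is immediate. First I would note that $\eta$ is surjective, so that $\eta^{*}$ is injective: two functionals on $M/E$ agreeing after pullback along a surjection must coincide. For surjectivity of $\eta^{*}$, I would feed each functional $\alpha$ on $M$ through the descent just described, producing $\bar{\alpha}$ with $\eta^{*}\bar{\alpha} = \alpha$. Since $\eta^{*}$ is a pullback it is automatically an $\mathbf{R}$-algebra homomorphism, so being bijective it is an isomorphism.

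To see that $M/E$ is hausdorff, I would show that its functionals separate points and invoke the fact that any space whose points are separated by continuous real-valued functions is hausdorff. Given distinct points of $M/E$, lift them to points $p, q$ of $M$; since they have distinct images under $\eta$, the pair $(p, q)$ lies outside $E$, so some functional $\alpha$ on $M$ satisfies $\alpha(p) \neq \alpha(q)$. Descending $\alpha$ to $\bar{\alpha}$ as above gives a functional on $M/E$ taking different values at the two points, and the preimages under $\bar{\alpha}$ of disjoint open neighborhoods of those values separate the points.

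I expect the only point requiring care --- the \emph{main obstacle}, such as it is --- to be the descent step: one must check that the set-theoretic factorization $\bar{\alpha}$ of a fiber-constant functional is genuinely smooth, and not merely a well-defined function. This is exactly where the hypothesis that the quotient \emph{manifold} $\eta : M \to M/E$ exists does its work, since a quotient submersion admits local smooth sections and thereby makes the smoothness of $\bar{\alpha}$ a local matter pulled back from that of $\alpha$; equivalently, it is precisely the universal property \citep[\S5.9.5]{Bourbaki1971} of the quotient. Everything else is formal.
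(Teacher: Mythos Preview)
Your proposal is correct and follows essentially the same line as the paper's proof: both check that $M/E$ is hausdorff and that $\eta^{*}$ is a bijection via the universal property of the quotient, with the definition of $E$ guaranteeing that every functional descends. The only cosmetic difference is in the hausdorffness step---the paper observes that $E$ is closed in $M\times M$ (as an intersection of sets $(\alpha\times\alpha)^{-1}(\Delta_{\mathbf{R}})$) and cites \citep[\S5.9.5]{Bourbaki1971} for the fact that a closed equivalence relation yields a hausdorff quotient, whereas you unpack this into the equivalent observation that functionals on $M/E$ separate points.
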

\begin{proof}
  Since $E$ is the intersection of closed sets of the form $(\alpha \times
  \alpha)^{-1}(\Delta_{\mathbf{R}})$ it's closed, making $M/E$ hausdorff
  \citep[\S5.9.5]{Bourbaki1971}.
  The universal property of the quotient says that the $\mathbf{R}$-algebra
  homomorphism $\eta^{*} : C^{\infty}(M/E) \to C^{\infty}(M)$ is an injection
  whose image is the set of functionals $\alpha$ such that $\alpha(p) =
  \alpha(q)$ for all $(p, q)$ in $E$.  Since this is all functionals, the map
  $\eta^{*}$ a surjection.
\end{proof}

This leaves open the possibility that the hausdorff reflection may exist in
cases where the quotient $M/E$ does not.  The bijection of
\cref{prop:Coo_embedding} gives a first step toward closing off this
possibility:
\begin{prop}
  \label{cor:refl_colim}
  Let $\eta : M \to N$ be a hausdorff reflection, let $M_{\bullet}$ be a diagram
  in the category of hausdorff manifolds, and let $\theta :
  M_{\bullet} \to M$ be a colimiting cocone in the category of manifolds.  Then
  $\eta \cdot \theta : M_{\bullet} \to N$ is colimiting in the
  full subcategory of hausdorff manifolds.
\end{prop}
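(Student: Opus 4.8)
The plan is to verify the universal property of the colimit directly, using \cref{prop:Coo_embedding} to transport the relevant factorization back and forth across the equivalence between smooth maps into a hausdorff manifold and $\mathbf{R}$-algebra homomorphisms. First note that the apex $N$ of the cocone $\eta \cdot \theta$ is hausdorff, since $\eta$ is a hausdorff reflection, so this is a legitimate cocone in $\mathrm{HMan}$. Because $\mathrm{HMan}$ is a full subcategory of $\mathrm{Man}$, a competing cocone in $\mathrm{HMan}$ is nothing but a cocone $\psi : M_{\bullet} \to H$ in $\mathrm{Man}$ whose apex $H$ happens to be hausdorff, so I would fix such a $\psi$ and produce a unique factorization through $\eta \cdot \theta$.

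Since $\theta : M_{\bullet} \to M$ is colimiting in $\mathrm{Man}$, there is a unique smooth map $v : M \to H$ with $v \cdot \theta = \psi$. The next step is to factor $v$ through $\eta$. Pulling back gives an $\mathbf{R}$-algebra homomorphism $v^{*} : C^{\infty}(H) \to C^{\infty}(M)$, and since $\eta^{*} : C^{\infty}(N) \to C^{\infty}(M)$ is an isomorphism I can form $(\eta^{*})^{-1} \cdot v^{*} : C^{\infty}(H) \to C^{\infty}(N)$. As $H$ is hausdorff, \cref{prop:Coo_embedding} provides a unique smooth map $u : N \to H$ with $u^{*} = (\eta^{*})^{-1} \cdot v^{*}$. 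By contravariance of $C^{\infty}$ we then have $(u \cdot \eta)^{*} = \eta^{*} \cdot u^{*} = v^{*}$, and since $u \cdot \eta$ and $v$ are smooth maps into the hausdorff manifold $H$ inducing the same algebra homomorphism, the injectivity half of \cref{prop:Coo_embedding} gives $u \cdot \eta = v$. Hence $u \cdot (\eta \cdot \theta) = v \cdot \theta = \psi$, so $u$ is a morphism of cocones.

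For uniqueness, suppose $u' : N \to H$ also satisfies $u' \cdot (\eta \cdot \theta) = \psi$. Then $u' \cdot \eta$ and $v$ are maps $M \to H$ both restricting to $\psi$ along $\theta$, so the uniqueness clause in the universal property of $\theta$ forces $u' \cdot \eta = v = u \cdot \eta$. Pulling back yields $\eta^{*} \cdot u'^{*} = \eta^{*} \cdot u^{*}$, and cancelling the isomorphism $\eta^{*}$ gives $u'^{*} = u^{*}$; a second application of \cref{prop:Coo_embedding}, now to maps $N \to H$ between hausdorff manifolds, yields $u' = u$.

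The only real work is the middle step, where the isomorphism $\eta^{*}$ is exactly what allows the factorization constructed on the algebra side to be pushed back to a genuine factorization of $v$ through $\eta$; without invertibility of $\eta^{*}$ there would be no way to produce $u$. Everything else is the formal bookkeeping of composing the two universal properties, and the contravariance of $C^{\infty}$ is the only thing to watch when translating equations of maps into equations of homomorphisms.
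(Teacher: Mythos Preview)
Your argument is correct. You verify the universal property of the colimit in $\mathrm{HMan}$ directly: given a competing cocone $\psi : M_{\bullet} \to H$ with $H$ hausdorff, you first use the colimit property of $\theta$ in $\mathrm{Man}$ to produce $v : M \to H$, then push $v^{*}$ through the isomorphism $\eta^{*}$ and invoke \cref{prop:Coo_embedding} (with hausdorff codomain $H$) to realize the resulting algebra map as $u^{*}$ for a unique smooth $u : N \to H$. The check that $u \cdot \eta = v$ and the uniqueness argument both rely only on the injectivity half of \cref{prop:Coo_embedding} and on cancelling the isomorphism $\eta^{*}$, which is fine.

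The paper's proof takes a more global categorical route: rather than chasing one cocone, it observes that $C^{\infty} : \mathrm{Man}^{\mathrm{op}} \to \mathrm{Alg}_{\mathbf{R}}$ sends colimits to limits (because post-composing with the monadic forgetful functor $U$ gives the representable $\mathrm{Man}(\blank,\mathbf{R})$, and $U$ reflects limits), and that $C^{\infty}$ restricted to $\mathrm{HMan}$ is full and faithful, hence reflects limits. From these two facts the statement drops out after composing with the isomorphism $\eta^{*}$. Your approach is more elementary and self-contained, needing only \cref{prop:Coo_embedding}; the paper's approach isolates the reusable fact that $C^{\infty}$ preserves limits on all of $\mathrm{Man}$, which it invokes again later (e.g.\ in the fr\"olicher discussion). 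Either argument is adequate here.
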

\begin{proof}
  Since $C^{\infty}$ is full and faithful on the subcategory of hausdorff
  manifolds it reflects limits, so it suffices to show that
  $\theta^{*} \cdot \eta^{*} : C^{\infty}(N) \to
  C^{\infty}(M_{\bullet})$ is a limiting cone of $\mathbf{R}$-algebras.  Since
  $\eta^{*}$ is an isomorphism, this is the same as showing that
  $\theta^{*} : C^{\infty}(M) \to C^{\infty}(M_{\bullet})$ is a
  limiting cone of $\mathbf{R}$-algebras, for which it suffices to show that
  $C^{\infty}$ preserves limits on all of $\mathrm{Man}$.

  Consider the composite
  \[
    \mathrm{Man}^{\text{op}}
    \xrightarrow{C^{\infty}} \mathrm{Alg}_{\mathbf{R}}
    \xrightarrow{U} \mathrm{Set}
  \]
  with $U$ the underlying set functor.  This composite preserves limits because
  it's represented by the manifold $\mathbf{R}$.  Since $U$ is monadic it
  reflects limits, from which it follows that $C^{\infty}$ preserves limits.
\end{proof}

Every manifold is a colimit of hausdorff spaces, more or less by definition, and
so any hausdorff reflection that exists is a colimit in the subcategory of
hausdorff manifolds.  More concretely, any manifold $M$ is second countable,
hence admits a countable cover $\{U_{i}\}_{i \in I}$ by open charts.  Since
smoothness is a local property, we have a coequalizer
\[
  \begin{tikzcd}
    {\coprod_{i, j \in I} U_{i} \cap U_{j}}
    \ar[r, shift left] \ar[r, shift right] &
    {\coprod_{i \in I} U_{i}} \ar[r] &
    M
  \end{tikzcd}
\]
where the parallel arrows are the inclusions of $U_{i} \cap U_{j}$ into $U_{i}$
and $U_{j}$, respectively.  Since the $U_{i}$ are all charts, hence hausdorff,
the hausdorff reflection of $M$ is the coequalizer of this diagram in the
subcategory of hausdorff manifolds.

\section{The hausdorff reflection as a fr\"olicher space}

\cref{cor:refl_colim} is merely a first step because colimits of manifolds
don't exist in general and aren't systematically computable when they do.  This
is naturally solved by embedding manifolds in a larger category where
colimits exist.  Since we are concerned with topological matters, this larger
category should be small enough to admit a reasonable theory of topology.
One option suitable for these purposes is the category of fr\"olicher spaces
\citep{Froelicher1980,Froelicher1982,Kriegl1997,nLab,Stacey2011}.  
These are particularly convenient because hausdorff fr\"olicher spaces form an
honest reflective subcategory of all fr\"olicher spaces, so we can always
compute the hausdorff reflection as a fr\"olicher space.  This will coincide
with the hausdorff reflection as a manifold when the latter exists.

A \emph{fr\"olicher space} is a triple $(X, C_{X}, F_{X})$ consisting of a set
$X$, a set $C_{X}$ of set functions $\mathbf{R} \to X$, and a set $F_{X}$ of set
functions $X \to \mathbf{R}$ such that
\begin{enumerate}[(i)]
  \item a set function $\alpha : X \to \mathbf{R}$ belongs to $F_{X}$ if and
    only if $\alpha \cdot \gamma : \mathbf{R} \to \mathbf{R}$ is smooth for all
    $\gamma$ in $C_{X}$, and 
  \item a set function $\gamma : \mathbf{R} \to X$ belongs to $C_{X}$ if and
    only if $\alpha \cdot \gamma : \mathbf{R} \to \mathbf{R}$ is smooth for all
    $\alpha$ in $F_{X}$.
\end{enumerate}
We call $C_{X}$ the set of \emph{curves} in $X$ and $F_{X}$ the set of
\emph{functionals} on $X$, and we refer to a fr\"olicher space by its underlying
set.  A map $f : X \to Y$ of fr\"olicher spaces is a function of the underlying
sets satisfying the following equivalent conditions
\begin{enumerate}[(i)]
  \item for every curve $\gamma$ in $C_{X}$, the composite $f \cdot \gamma$ is a
    curve in $C_{Y}$;
  \item for every functional $\alpha$ in $F_{Y}$, the composite $\alpha \cdot f$
    is a functional in $F_{X}$; and
  \item for every curve $\gamma$ in $C_{X}$ and functional $\alpha$ in $F_{Y}$,
    the composite $\alpha \cdot f \cdot \gamma$ is smooth.
\end{enumerate}
\begin{prop}[\citep{Froelicher1980}]
  The category $\mathrm{Froe}$ of fr\"olicher spaces is complete, cocomplete,
  and cartesian closed.  The underlying set functor is topological, hence a
  faithful, amnestic isofibration.
\end{prop}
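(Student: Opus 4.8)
The plan is to isolate the combinatorial engine behind fr\"olicher spaces — the antitone Galois connection between candidate curves and candidate functionals — and then read off the topological, (co)completeness, and formal properties from it, isolating cartesian closedness as the one genuinely analytic point. For any set $X$, and for $C_{0} \subseteq \mathrm{Set}(\mathbf{R}, X)$ and $F_{0} \subseteq \mathrm{Set}(X, \mathbf{R})$, write
\[
  C_{0}^{\ast} = \{\alpha : X \to \mathbf{R} : \alpha \cdot \gamma \text{ is smooth for all } \gamma \in C_{0}\},
  \qquad
  F_{0}^{\ast} = \{\gamma : \mathbf{R} \to X : \alpha \cdot \gamma \text{ is smooth for all } \alpha \in F_{0}\}.
\]
This is an antitone Galois connection, so $(\blank)^{\ast\ast}$ is a closure operator with $(\blank)^{\ast\ast\ast} = (\blank)^{\ast}$, and conditions (i)--(ii) say exactly that a fr\"olicher structure is a Galois-closed pair $(C_{X}, F_{X})$ with $C_{X}^{\ast} = F_{X}$ and $F_{X}^{\ast} = C_{X}$. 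Hence any set of curves $C_{0}$ generates a unique structure $(C_{0}^{\ast\ast}, C_{0}^{\ast})$, any set of functionals $F_{0}$ generates $(F_{0}^{\ast}, F_{0}^{\ast\ast})$, and a structure is pinned down by either half; I will switch freely between the three equivalent descriptions of maps.

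To show the underlying set functor $U$ is topological I produce initial lifts. Given a structured source $(f_{i} : X \to U Y_{i})_{i}$, set $F_{0} = \bigcup_{i} \{\alpha \cdot f_{i} : \alpha \in F_{Y_{i}}\}$ and equip $X$ with the generated structure $(F_{0}^{\ast}, F_{0}^{\ast\ast})$; each $f_{i}$ is then a map by (ii). For initiality, a function $g : Z \to X$ is a map iff $g \cdot \delta \in C_{X} = F_{0}^{\ast}$ for every $\delta \in C_{Z}$, which unwinds to ``$\alpha \cdot f_{i} \cdot g \cdot \delta$ is smooth for all $i$, $\alpha$, $\delta$,'' i.e.\ to ``$f_{i} \cdot g$ is a map for every $i$'' by (iii) — exactly the universal property. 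Uniqueness holds because two initial lifts render $\mathrm{id}_{X}$ a map in both directions, forcing equal functionals and hence equal structures. Faithfulness is trivial, since maps are underlying functions; amnesticity follows because the fibre order $Y \leq Y'$ iff $F_{Y'} \subseteq F_{Y}$ is antisymmetric; and the isofibration property is the standard consequence that a bijective initial lift is an isomorphism — all automatic once $U$ is topological.

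Completeness and cocompleteness then come for free: a topological functor over a complete and cocomplete base yields a complete and cocomplete total category, with a limit computed as the initial lift of the source of projections over $\lim$ in $\mathrm{Set}$, and a colimit as the \emph{final} lift — the dual construction, generating from $C_{0} = \bigcup_{i}\{f_{i} \cdot \gamma : \gamma \in C_{Y_{i}}\}$ — of the sink over $\operatorname{colim}$ in $\mathrm{Set}$. Since $\mathrm{Set}$ is bicomplete, so is $\mathrm{Froe}$.

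Finite products are the topological products, so a curve in $A \times B$ is precisely a pair of curves. For the internal hom put $[X, Y]$ on the set $\mathrm{Froe}(X, Y)$ and generate its structure from the curves $C_{0} = \{c : \mathbf{R} \to \mathrm{Froe}(X, Y) : (t, x) \mapsto c(t)(x) \text{ is a map } \mathbf{R} \times X \to Y\}$. The transpose $h \mapsto \hat{h}$ sends a map $h : Z \times X \to Y$ to a map $\hat{h} : Z \to [X, Y]$: for $\delta \in C_{Z}$ the curve $\hat{h} \cdot \delta$ lands in $C_{0}$ because its adjunct is $h \cdot (\delta \times \mathrm{id}_{X})$, a composite of maps. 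Conversely, transposing a map $Z \to [X, Y]$ back, together with mutual inverseness, reduces to showing that evaluation $\mathrm{ev} : [X, Y] \times X \to Y$ is a map. I expect this last point — equivalently, that $C_{0}$ is already Galois-closed, so that curves in the generated structure still evaluate smoothly against curves of $X$ — to be the main obstacle, and the only genuinely analytic step. Testing against curves and functionals collapses it to the classical smooth exponential law for Euclidean spaces, namely that $k : \mathbf{R} \times \mathbf{R}^{n} \to \mathbf{R}$ is smooth iff its transpose is a smooth curve into $C^{\infty}(\mathbf{R}^{n}, \mathbf{R})$ \citep{Kriegl1997}. Granting this saturation lemma, $\mathrm{ev}$ and $\hat{(\blank)}$ assemble into a natural bijection $\mathrm{Froe}(Z \times X, Y) \cong \mathrm{Froe}(Z, [X, Y])$, establishing cartesian closedness and completing the proposition.
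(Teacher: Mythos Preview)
The paper does not supply its own proof of this proposition: it is stated with a citation to Fr\"olicher's 1980 paper and used as a black box thereafter. Your sketch is therefore not being compared against anything in the text, but it is essentially the standard argument and is correct in outline. The Galois-connection framing, the construction of initial lifts, and the deduction of (co)completeness and the fibrational properties from topologicity are all sound and routine.

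You are also right that cartesian closedness is the only step with genuine content, and your identification of the obstacle --- that the generating set $C_{0}$ of curves into $\mathrm{Froe}(X,Y)$ must already be Galois-closed for evaluation to be a map --- is precisely the crux. One small sharpening: since curves in $X$ have domain $\mathbf{R}$, testing against a single $\gamma \in C_{X}$ and $\beta \in F_{Y}$ reduces you to the case $n = 1$, i.e.\ to the exponential law $C^{\infty}(\mathbf{R}^{2}, \mathbf{R}) \cong C^{\infty}(\mathbf{R}, C^{\infty}(\mathbf{R}, \mathbf{R}))$, rather than to general $\mathbf{R}^{n}$. The cleanest way to run the argument is to note that $f \mapsto \beta \cdot f \cdot \gamma$ carries $C_{0}$ into curves of the fr\"olicher space $C^{\infty}(\mathbf{R}, \mathbf{R})$, so every functional on the latter pulls back to an element of $C_{0}^{\ast}$; this forces joint smoothness of $(t,s) \mapsto \beta(c(t)(\gamma(s)))$ for any $c \in C_{0}^{\ast\ast}$, and restriction to the diagonal finishes. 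Granting the cited exponential law, your outline closes.
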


The category of manifolds is a subcategory of the category of fr\"olicher
spaces, with the subcategory of hausdorff manifolds a full subcategory
\begin{prop}
  Any manifold $M$ gives a fr\"olicher space $FM = (M, C_{M}, F_{M})$, where
  $F_{M} = \mathrm{Man}(M, \mathbf{R})$.  Any smooth function $f : M \to N$
  gives a map of fr\"olicher spaces $f : FM \to FN$. This defines a faithful
  functor $F : \mathrm{Man} \to \mathrm{Froe}$ that is full on the subcategory
  of hausdorff manifolds.
\end{prop}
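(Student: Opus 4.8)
The plan is to verify three things in sequence: first that $FM$ is a well-defined frölicher space, second that a smooth map $f$ between manifolds is automatically a frölicher map, and third that the resulting functor $F$ is faithful in general and full when the codomain is hausdorff.

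\medskip

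First I would construct the frölicher space $FM$. The natural candidate is to take $F_M = \mathrm{Man}(M, \mathbf{R}) = C^\infty(M)$ as the set of functionals and to define $C_M$ as the saturation: the set of all set functions $\gamma : \mathbf{R} \to M$ such that $\alpha \cdot \gamma$ is smooth for every $\alpha$ in $F_M$. Condition (ii) of the definition then holds by construction. For condition (i), I must check that a set function $\alpha : M \to \mathbf{R}$ lies in $F_M$ exactly when $\alpha \cdot \gamma$ is smooth for all $\gamma \in C_M$. The forward direction is immediate from the definition of $C_M$. For the reverse, I would argue that if $\alpha \cdot \gamma$ is smooth for every curve $\gamma$ in $C_M$ — in particular for every smooth curve $\gamma : \mathbf{R} \to M$ in the manifold sense, which certainly belongs to $C_M$ — then $\alpha$ is smooth as a function on the manifold $M$, hence lies in $\mathrm{Man}(M,\mathbf{R}) = F_M$. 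This uses \emph{Boman's theorem}: a function on a (finite-dimensional) manifold is smooth iff its composite with every smooth curve is smooth. This is the step I expect to be the main obstacle, since it is the only place where genuine analysis enters rather than formal nonsense; I would cite Boman's theorem \citep{Kriegl1997} rather than reprove it.

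\medskip

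Next I would check functoriality. Given a smooth $f : M \to N$, I must show $f : FM \to FN$ is a frölicher map, for which it suffices to verify condition (ii) of the map definition: for every $\alpha \in F_N = C^\infty(N)$, the composite $\alpha \cdot f$ lies in $F_M = C^\infty(M)$. But $\alpha \cdot f$ is a composite of smooth maps of manifolds, hence smooth, so this is immediate. Functoriality ($F(gf) = F(g)F(f)$ and $F(\mathrm{id}) = \mathrm{id}$) is inherited from the underlying-set level, since $F$ acts as the identity on underlying sets and morphisms.

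\medskip

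Finally I would establish the two properties of $F$. Faithfulness is automatic: $F$ does not alter the underlying set function, so distinct smooth maps $M \to N$ give distinct frölicher maps $FM \to FN$. For fullness on hausdorff manifolds, suppose $N$ is hausdorff and $f : FM \to FN$ is a frölicher map; I must produce a \emph{smooth} map $M \to N$ inducing it. By condition (ii) of the map definition, $f$ pulls every functional in $F_N = C^\infty(N)$ back to a functional in $F_M = C^\infty(M)$, so the underlying set function $f : M \to N$ induces an $\mathbf{R}$-algebra homomorphism $f^* : C^\infty(N) \to C^\infty(M)$. Since $N$ is hausdorff, \cref{prop:Coo_reflect_smoothness} applies and tells me $f$ is smooth. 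Thus $f$ arises from a unique morphism in $\mathrm{Man}(M,N)$, establishing fullness. The only subtlety worth flagging is that fullness genuinely requires hausdorffness of $N$: the non-injective example $\gamma : \mathbf{R} \to \mathbf{R}_{\Bumpeq}$ from the previous section shows a frölicher map whose underlying function is not even continuous, so \cref{prop:Coo_reflect_smoothness} cannot be dropped.
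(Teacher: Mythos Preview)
Your proposal is correct and follows essentially the same approach as the paper: define $C_M$ by saturation from $F_M = C^\infty(M)$, invoke Boman's theorem for condition (i), verify the map condition via pullback of functionals, and deduce fullness on hausdorff targets from \cref{prop:Coo_reflect_smoothness}. You add an explicit word on faithfulness and the cautionary remark about $\mathbf{R}_{\Bumpeq}$, neither of which appears in the paper's proof, but these are embellishments rather than a different route.
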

\begin{proof}
  Let $C_{M}$ be the set satisfying condition (ii) in the definition of a
  fr\"olicher space with respect to $F_{M}$.  Then $C_{M}$ contains all smooth
  functions $\gamma : \mathbf{R} \to M$, and the forward direction of condition
  (i) is satisfied.  If $\alpha : M \to \mathbf{R}$ is a set function such that
  $\alpha \cdot \gamma : \mathbf{R} \to \mathbf{R}$ is smooth for all $\gamma$
  in $C_{M}$, then in particular $\alpha \cdot \gamma : \mathbf{R} \to
  \mathbf{R}$ is smooth for all smooth maps $\gamma : \mathbf{R} \to M$.  It
  follows from Boman's theorem that $\alpha$ is smooth \citep{Boman1967}.

  If $f : M \to N$ is a smooth function then $f^{*} : C^{\infty}(N) \to
  C^{\infty}(M)$ is an $\mathbf{R}$-algebra homomorphism, so $f$ satisfies
  condition (ii) in the definition of a map of fr\"olicher spaces.  Pullback is
  functorial, giving the functor in the statement, and $F$ is full on the
  subcategory of hausdorff manifolds by \cref{prop:Coo_reflect_smoothness}.
\end{proof}

Since $F$ is full and faithful on the subcategory of hausdorff manifolds, we can
suppress it in that case.  But when $M$ is non-hausdorff, the set $C_{M}$ of
curves in $M$ contains functions that aren't smooth.  Any pair $(p, q)$ of
distinct points of $M$ in the closure of the diagonal gives a function $\gamma :
\mathbf{R} \to M$ satisfying $\gamma(t) = p$ for $t \leq 0$ and $\gamma(t) = q$
otherwise.  Then $\alpha \cdot \gamma$ is the constant function at $\alpha(p) =
\alpha(q)$, which is smooth, putting $\gamma$ in $C_{M}$.  But for any cartesian
neighborhood $U$ of $p$ we have $\gamma^{-1}(U) = (-\infty, 0]$, which isn't
open, meaning that $\gamma$ isn't continuous.  It follows that the functor $F :
\mathrm{Man} \to \mathrm{Froe}$ isn't full, and so in general we must
distinguish between the non-hausdorff manifold $M$ and the fr\"olicher space
$FM$.  Nevertheless, when the codomain is a hausdorff manifold $N$,
\cref{prop:Coo_reflect_smoothness} implies that every fr\"olicher map $f : FM \to
N$ is also smooth.

For any fr\"olicher space $X$, the set $F_{X}$ of functionals on $X$ is an
$\mathbf{R}$-algebra when endowed with pointwise addition, multiplication, and
multiplication by scalars.  And pullback along any fr\"olicher map gives an
$\mathbf{R}$-algebra homomorphism by condition (ii) of the definition of
fr\"olicher maps.  This defines a functor
\[
  C^{\infty} : \mathrm{Froe}^{\text{op}} \to \mathrm{Alg}_{\mathbf{R}}
\]
For any manifold $M$ we have $F_{M} = \mathrm{Man}(M, \mathbf{R})$, so this
notation is consistent with the algebra of functionals functor on the category
of manifolds.  And the argument of \cref{cor:refl_colim} shows that $C^{\infty}$
preserves limits, since $C^{\infty}$ is represented by the fr\"olicher space
$\mathbf{R}$.

The underlying set of a fr\"olicher space $X$ naturally admits two topologies:
the \emph{curve topology} is the final topology with respect to $C_{X}$, and the
\emph{functional topology} is the initial topology with respect to $F_{X}$.
Maps of fr\"olicher spaces are continuous with respect to both.  The curve
topology contains the functional topology, but the reverse inclusion doesn't
hold in general.  However, the notion of hausdorff fr\"olicher space is
univocal, and the subcategory of hausdorff fr\"olicher spaces is reflective,
thanks the following results due to Andrew Stacey \citep{nLab}:
\begin{prop}
  \label{prop:hfroe}
  For any fr\"olicher space, the curve topology is hausdorff if and only if the
  functional topology is hausdorff.  We call a fr\"olicher space
  \emph{hausdorff} if the curve and functional topologies are hausdorff.
\end{prop}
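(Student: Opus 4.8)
The plan is to prove the two implications separately, using the already-noted fact that the curve topology refines the functional topology together with the standard description of an initial topology. First I would record that the functional topology is hausdorff precisely when the functionals in $F_{X}$ separate points: if distinct points $p$ and $q$ satisfy $\alpha(p) \neq \alpha(q)$ for some $\alpha$ in $F_{X}$, then the preimages under $\alpha$ of disjoint neighborhoods of $\alpha(p)$ and $\alpha(q)$ in $\mathbf{R}$ are disjoint functional-open sets separating $p$ and $q$; conversely, if disjoint functional-open sets separate $p$ and $q$, then some subbasic set $\alpha^{-1}(W)$ contains one but not the other, so $\alpha(p) \neq \alpha(q)$. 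The implication from functional to curve is then immediate, since disjoint functional-open sets separating two points are also curve-open and hence separate the points in the curve topology as well.

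For the converse I would argue contrapositively. Suppose the functional topology is not hausdorff; by the separation criterion there are distinct points $p$ and $q$ with $\alpha(p) = \alpha(q)$ for every functional $\alpha$ in $F_{X}$. Define $\gamma : \mathbf{R} \to X$ by $\gamma(t) = p$ for $t \leq 0$ and $\gamma(t) = q$ for $t > 0$. For any functional $\alpha$ the composite $\alpha \cdot \gamma$ is constant at the common value $\alpha(p) = \alpha(q)$, hence smooth, so $\gamma$ is a curve in $C_{X}$ by condition (ii) in the definition of a fr\"olicher space. Now let $O$ be any curve-open set containing $p$. By definition of the final topology its preimage $\gamma^{-1}(O)$ is open in $\mathbf{R}$, and it contains $0$ because $\gamma(0) = p$; hence it contains some $t > 0$, and then $q = \gamma(t)$ lies in $O$. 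So every curve-open neighborhood of $p$ also contains $q$, which forbids separating $p$ and $q$ by disjoint curve-open sets and shows the curve topology is not hausdorff.

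The substance of the argument is the step curve $\gamma$: the hypothesis that no functional distinguishes $p$ from $q$ is exactly what renders the discontinuous jump from $p$ to $q$ invisible to every functional, and therefore admissible as a fr\"olicher curve. This is the mechanism by which the line with two origins fails to be hausdorff, now isolated at the level of fr\"olicher spaces. The remaining steps are formal; the only point needing a little care is the routine verification that the functional topology, as an initial topology, is hausdorff if and only if its defining functionals separate points.
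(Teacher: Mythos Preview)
Your proof is correct and follows essentially the same route as the paper: both directions rest on the inclusion of the functional topology in the curve topology, and the nontrivial implication is handled via the same step curve $\gamma$ jumping from $p$ to $q$, which lies in $C_X$ precisely because no functional distinguishes $p$ from $q$. The only cosmetic difference is that the paper phrases the hard direction as a proof by contradiction (assume curve-hausdorff, pick a separating open $U$, and observe $\gamma^{-1}(U)=(-\infty,0]$ is not open), whereas you argue the contrapositive directly by showing every curve-open neighborhood of $p$ must contain $q$.
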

\begin{proof}
  The backward direction holds because the curve topology contains the
  functional topology.  For the other direction, suppose that the curve topology
  is hausdorff.  To show that the diagonal of $X$ is closed in the
  functional topology, it suffices to show that it is the intersection of the
  preimages of the diagonal of $\mathbf{R}$ under all functionals.

  So suppose for contradiction that $p$ and $q$ are distinct points such that
  $\alpha(p) = \alpha(q)$ for all functionals $\alpha$ on $X$. Let $\gamma :
  \mathbf{R} \to X$ be the function satisfying $\gamma(t) = p$ for $t \leq 0$
  and $\gamma(t) = q$ otherwise.  Then $\alpha \cdot \gamma$ is the constant
  function at $\alpha(p) = \alpha(q)$ for any $\alpha$, which is smooth, putting
  $\gamma$ in $C_{M}$.  Since the curve topology is hausdorff and $p$ and $q$
  are distinct, we can choose a neighborhood $U$ of $p$ that's open in the curve
  topology and doesn't contain $q$.  But then $\gamma^{-1}(U) = (-\infty, 0]$,
  so $\gamma$ isn't continuous, contradicting the definition of the curve
  topology.
\end{proof}
\begin{prop}
  The full subcategory $\mathrm{HFroe} \hookrightarrow \mathrm{Froe}$ on the
  hausdorff fr\"olicher spaces has a reflection $L : \mathrm{Froe} \to
  \mathrm{HFroe}$.
\end{prop}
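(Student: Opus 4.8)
The plan is to build the reflection as an explicit quotient, in the spirit of the relation $E$ from \cref{thm:quotient}. Given a fr\"olicher space $X$, define an equivalence relation by setting $p \sim q$ whenever $\alpha(p) = \alpha(q)$ for every functional $\alpha$ in $F_{X}$, and let $\eta_{X} : X \to LX$ be the quotient map $X \to X/{\sim}$, formed in $\mathrm{Froe}$. It exists because $\mathrm{Froe}$ is cocomplete, and its underlying set carries the final fr\"olicher structure along $\eta_{X}$ since the underlying set functor is topological. I would then show that $LX$ is hausdorff and that $\eta_{X}$ is universal among fr\"olicher maps from $X$ into hausdorff fr\"olicher spaces; by the usual argument these universal arrows assemble into a left adjoint $L$ to the inclusion $\mathrm{HFroe} \hookrightarrow \mathrm{Froe}$, with $\eta$ as unit.

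Both claims rest on the description $F_{LX} = \{\bar\alpha \mid \alpha \in F_{X}\}$, where $\bar\alpha$ is the descent of $\alpha$ along $\eta_{X}$, which exists because each $\alpha$ in $F_{X}$ is constant on $\sim$-classes by definition. Granting this, hausdorffness is immediate: distinct classes $[p] \ne [q]$ come from points with $\alpha(p) \ne \alpha(q)$ for some $\alpha$ in $F_{X}$, so $\bar\alpha$ separates them; hence the functionals on $LX$ separate points, the functional topology is hausdorff, and $LX$ is a hausdorff fr\"olicher space by \cref{prop:hfroe}. For universality, let $f : X \to Y$ be a fr\"olicher map with $Y$ hausdorff. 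Each $\beta$ in $F_{Y}$ pulls back to $\beta \cdot f$ in $F_{X}$, which is constant on $\sim$-classes; since $Y$ is hausdorff its functionals separate points, forcing $f$ itself to be constant on $\sim$-classes. Thus $f$ descends to a set map $\bar f : LX \to Y$ with $\bar f \cdot \eta_{X} = f$, unique because $\eta_{X}$ is surjective, and $\bar f$ is a fr\"olicher map because $(\beta \cdot \bar f) \cdot \eta_{X} = \beta \cdot f \in F_{X}$ exhibits $\beta \cdot \bar f$ as an element of $F_{LX}$ for every $\beta$ in $F_{Y}$.

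The step I expect to be the main obstacle is justifying the description of $F_{LX}$, specifically that every $\beta : LX \to \mathbf{R}$ with $\beta \cdot \eta_{X} \in F_{X}$ is genuinely a functional on the quotient. The reverse inclusion is just the statement that $\eta_{X}$ is a map, and the final structure a priori only guarantees $F_{LX} \subseteq \{\beta \mid \beta \cdot \eta_{X} \in F_{X}\}$; the work is to check that this candidate set is already closed under the fr\"olicher conditions rather than needing to be cut down. The key observation is that curves lift: if $\delta$ is a curve in $C_{X}$ then $\eta_{X} \cdot \delta$ is compatible with every such $\beta$, since $(\beta \cdot \eta_{X}) \cdot \delta$ is smooth whenever $\beta \cdot \eta_{X} \in F_{X}$. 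Feeding this through conditions (i) and (ii) of the definition of a fr\"olicher space shows that the candidate set equals $F_{LX}$, after which the remainder of the argument is formal.
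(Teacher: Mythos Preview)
Your construction is correct and essentially the same as the paper's: both form $LX$ as the quotient identifying points not separated by functionals, establish that $\eta_{X}^{*} : F_{LX} \to F_{X}$ is a bijection, and deduce hausdorffness from the fact that functionals on $LX$ separate points; the paper packages the quotient as the coequalizer of the kernel pair of $X \to \prod_{\alpha} \mathbf{R}$ and verifies the adjunction via the idempotent criterion ($\eta_{X}$ an isomorphism for hausdorff $X$) rather than by your direct universality argument, but these are cosmetic differences. Your flagged ``main obstacle'' is in fact no obstacle: the final fr\"olicher structure along $\eta_{X}$ is by definition generated from the curves $\eta_{X} \cdot \gamma$, so $F_{LX} = \{\beta \mid \beta \cdot \eta_{X} \cdot \gamma \text{ smooth for all } \gamma \in C_{X}\} = \{\beta \mid \beta \cdot \eta_{X} \in F_{X}\}$ on the nose, and the paper accordingly asserts the bijection $\eta_{X}^{*}$ without further comment.
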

\begin{proof}
  Any fr\"olicher space $X$ gives a map of fr\"olicher spaces $X \to
  \prod_{\alpha \in F_{X}} \mathbf{R}$ whose $\alpha$th component is $\alpha$.
  Let $\eta_{X} : X \to LX$ be the coequalizer of the kernel pair of this map.
  The naturality of everything in sight makes $L$ a functor and $\eta_{X}$
  natural in $X$.

  On underlying sets, the map $\eta_{X} : X \to LX$ is the quotient
  such that $\eta_{X}(p) = \eta_{X}(q)$ just in case $\alpha(p) = \alpha(q)$ for
  all functionals $\alpha$ on $X$.
  The set of functionals $F_{LX}$ is such that $\eta_{X}^{*} : F_{LX} \to F_{X}$
  is a bijection.
  So if $\eta_{X}(p)$ and $\eta_{X}(q)$ are distinct points of $LX$,
  there's some functional $\alpha = \alpha^{\dagger} \cdot \eta_{X}$ on $X$ such
  that $\alpha(p)$ and $\alpha(q)$ are distinct, implying that $(\eta_{X}(p),
  \eta_{X}(q))$ doesn't belong to $(\alpha^{\dagger} \times
  \alpha^{\dagger})^{-1}(\Delta_{\mathbf{R}})$.  Since $\eta_{X}$ is surjective,
  this means the functional topology on $LX$ is hausdorff, so $L$ factors
  through the full subcategory of hausdorff fr\"olicher spaces.

  If $X$ is hausdorff, then for any distinct $p$ and $q$ in $X$ the argument of
  \cref{prop:hfroe} gives a functional $\alpha$ on $X$ such that $\alpha(p)$ and
  $\alpha(q)$ are distinct, making $\eta_{X} : X \to LX$ an isomorphism.  So $L$
  is left adjoint to the inclusion $\mathrm{HFroe} \hookrightarrow
  \mathrm{Froe}$.
\end{proof}
\begin{prop}
  \label{prop:froe_top}
  A manifold $M$ is hausdorff if and only if the fr\"olicher space $FM$ is
  hausdorff.
\end{prop}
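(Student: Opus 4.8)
The plan is to pass through the functional topology and compare it with the manifold topology on $M$. By \cref{prop:hfroe}, the fr\"olicher space $FM$ is hausdorff precisely when its functional topology is hausdorff, so it suffices to relate the functional topology to the hausdorffness of $M$. The key observation, which does all the work, is that the functional topology on $FM$ is coarser than the manifold topology on $M$: every functional in $F_{M} = \mathrm{Man}(M, \mathbf{R})$ is smooth, hence continuous for the manifold topology, so the manifold topology is one of the topologies making all functionals continuous and therefore contains the coarsest such, which is the functional topology by definition.

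For the backward direction, I would suppose that $FM$ is hausdorff. Then by \cref{prop:hfroe} its functional topology is hausdorff, and since the manifold topology is finer than the functional topology, it is hausdorff as well; thus $M$ is a hausdorff manifold.

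For the forward direction, I would suppose that $M$ is hausdorff. Given distinct points $p$ and $q$, the partition-of-unity construction already used in the proof of \cref{prop:Coo_embedding} produces a functional $\alpha$ on $M$ with $\alpha(p) = 1$ and $\alpha(q) = 0$. Hence the functionals separate points, so for distinct $p$ and $q$ the pair $(p, q)$ fails to lie in $(\alpha \times \alpha)^{-1}(\Delta_{\mathbf{R}})$; the diagonal of $FM$ is therefore the intersection of these preimages and is closed in the functional topology, making that topology hausdorff. By \cref{prop:hfroe}, $FM$ is hausdorff.

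I expect no serious obstacle here: all the analytic content—the existence of separating functionals and the equivalence of the two fr\"olicher topologies—has already been isolated in the partition-of-unity argument and in \cref{prop:hfroe}. The only point requiring care is the \emph{direction} of the topology comparison, namely that the functional topology is coarser, not finer, than the manifold topology, since it is this inclusion, together with the fact that any topology finer than a hausdorff topology is again hausdorff, that drives the backward direction.
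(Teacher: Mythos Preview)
Your proof is correct and takes essentially the same approach as the paper: compare the functional topology with the manifold topology, using the trivial inclusion for the backward direction and a partition-of-unity argument for the forward direction. The paper additionally records, via the Special Curve Lemma, that the curve topology is also contained in the manifold topology (so that all three coincide when $M$ is hausdorff), but this extra information is not needed for the proposition itself, and your more economical route of simply showing that functionals separate points suffices.
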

\begin{proof}
  The backwards direction holds because for any $M$ the three topologies on its
  underlying set are related as
  \[
    \{\text{functional}\}
    \subseteq
    \{\text{curve}\}
    \subseteq
    \{\text{manifold}\}
  \]
  The first inclusion holds for all fr\"olicher spaces.  For the second, upon
  passing to charts it suffices to consider the case $M = \mathbf{R}^{n}$.
  Suppose that $A \subseteq \mathbf{R}^{n}$ isn't open in the manifold topology.
  Then there's some $p$ in $A$ such that for all $n$ we can choose a point
  $p_{n}$ not in $A$ satisfying $\lvert p_{n} - p \rvert < e^{-n}$.  By the
  Special Curve Lemma \citep[\S2.8]{Kriegl1997}, the infinite polygon through
  the $p_{n}$ can be parametrized to give a smooth curve $\gamma : \mathbf{R}
  \to \mathbf{R}^{n}$ satisfying $\gamma(1/n) = p_{n}$ for all $n$ and
  $\gamma(0) = p$.  Then $\gamma^{-1}(A)$ contains $0$ but not $1/n$ for any
  $n$, meaning it's not open and so $A$ is not open in the curve topology.

  Conversely, if $M$ is hausdorff and $U$ is a manifold open, then for any point
  $p$ of $U$ we can use a partition of unity argument to construct a functional
  $\alpha$ on $M$ that vanishes outside of $U$ and satisfies
  $\alpha(p) = 1$.  Then $\alpha^{-1}(\mathbf{R} \setminus \{0\})$ is a
  functional open neighborhood of $p$ contained in $U$.  So $U$ is a functional
  open, and the three topologies coincide.
\end{proof}

Summarizing the situation, we have a pullback of categories
\[
  \begin{tikzcd}
    {\mathrm{HMan}} \ar[r, hook] \ar[d, hook] 
    \ar[dr, phantom, "\lrcorner" very near start] &
    {\mathrm{HFroe}} \ar[d, hook] \\
    {\mathrm{Man}} \ar[r, "F"] &
    {\mathrm{Froe}}
  \end{tikzcd}
\]
with the leg on the right a reflective subcategory inclusion.  The category of
fr\"olicher spaces is complete and cocomplete, making its reflective
subcategories complete and cocomplete as well.  The full and faithful inclusions
reflect limits and colimits, and the relevant facts about colimits are
finished off by the following argument, also due to Andrew Stacey \citep{nLab}:
\begin{prop}
  \label{prop:LF_preserve_colim}
  The functors $\mathrm{HMan} \hookrightarrow \mathrm{HFroe}$ and
  $LF : \mathrm{Man} \to \mathrm{HFroe}$ preserve colimits.
\end{prop}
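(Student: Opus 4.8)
The plan is to reduce both statements to the universal property of the given colimit tested against the single hausdorff manifold $\mathbf{R}$, using that every hausdorff fr\"olicher space sits inside a power of $\mathbf{R}$. Throughout I write $i : \mathrm{HFroe}\hookrightarrow\mathrm{Froe}$ for the inclusion and use that its left adjoint $L$, being a left adjoint, preserves all colimits; this disposes of the ``$L$'' half of $LF$, so that the real content in both statements is the interaction of functionals with colimits.

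First I would record the one geometric input. If $Z$ is a hausdorff fr\"olicher space then its functionals separate points by \cref{prop:hfroe}, so the map $Z\to\prod_{\beta\in F_{Z}}\mathbf{R}$ sending $z$ to $(\beta(z))_{\beta}$ is injective. I would then check that it is an embedding: a curve into the image is the same thing as a curve $\gamma$ with each $\beta\circ\gamma$ smooth, which is exactly a curve in $Z$, so the subspace fr\"olicher structure agrees with that of $Z$. Consequently a set map into $Z$ whose composite into $\prod_{\beta}\mathbf{R}$ is fr\"olicher is automatically fr\"olicher as a map into $Z$, and $\prod_{\beta}\mathbf{R}$-valued fr\"olicher maps are just families of functionals.

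Now for the inclusion $\mathrm{HMan}\hookrightarrow\mathrm{HFroe}$, let $D$ be a diagram of hausdorff manifolds with colimiting cocone $\lambda : D\to M$ in $\mathrm{HMan}$, and let $\sigma : D\to Z$ be any cocone to a hausdorff fr\"olicher space. Testing the colimit against the hausdorff manifold $\mathbf{R}$ gives $C^{\infty}(M)\cong\lim_{j}C^{\infty}(D_{j})$ (consistent with $C^{\infty}$ preserving limits), so each functional $\beta\in F_{Z}$ produces a compatible family $\beta\circ\sigma_{j}$ and hence a unique functional $\widehat{\beta}$ on $M$ with $\widehat{\beta}\circ\lambda_{j}=\beta\circ\sigma_{j}$. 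Assembling these gives a smooth map $f=(\widehat{\beta})_{\beta}:M\to\prod_{\beta}\mathbf{R}$, and I would argue that $f$ factors through $Z$ and so defines the required comparison $M\to Z$; uniqueness is immediate since a map to $Z$ is determined by its functionals. For $LF$ the argument is the same after one extra move: given a cocone $LFD\to Z$, the adjunction $L\dashv i$ turns it into a fr\"olicher cocone $FD\to Z$, whose components $\beta\circ(\,\cdot\,)$ are functionals on the manifolds $D_{j}$ because $F_{FD_{j}}=C^{\infty}(D_{j})$; testing the $\mathrm{Man}$-colimit $\theta : D\to M$ against $\mathbf{R}$ again yields $f : M\to\prod_{\beta}\mathbf{R}$, and transporting the resulting $FM\to Z$ back across $\mathrm{HFroe}(LFM,Z)\cong\mathrm{Froe}(FM,Z)$ gives the map $LFM\to Z$.

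The step I expect to be the main obstacle is showing that the assembled map $f$ lands in $Z\subseteq\prod_{\beta}\mathbf{R}$ rather than merely in the ambient power of $\mathbf{R}$. On points coming from the diagram this is automatic, since $f(\lambda_{j}(d))=(\beta(\sigma_{j}(d)))_{\beta}$ is the image of $\sigma_{j}(d)\in Z$; the difficulty is to know that every point of the apex arises this way, i.e.\ that the colimiting cocone is jointly surjective on underlying sets. I would establish this by checking that the relevant underlying-set functors preserve these colimits --- for $\mathrm{Froe}$ this is immediate because the forgetful functor is topological, and for $\mathrm{Man}$ and for the hausdorff colimit one reduces to the same fact --- since colimiting cocones in $\mathrm{Set}$ are always jointly surjective. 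The clean alternative, which I would use if the underlying-set bookkeeping proves awkward, is to exhibit $Z$ as a regular subobject of a power of $\mathbf{R}$: then factoring $f$ through $Z$ amounts to an equation between two functional-valued maps out of the apex, and that equation holds on each $D_{j}$ because $\sigma_{j}$ lands in $Z$, hence holds on the apex by the universal property tested against $\mathbf{R}$. Either route reduces the whole proposition to the already-established facts that $L$ preserves colimits and that functionals detect maps into hausdorff targets.
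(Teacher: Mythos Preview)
Your outline is sound and shares its two essential ingredients with the paper: the functor $C^{\infty}$ turns the given colimit into a limit of $\mathbf{R}$-algebras, and the colimiting cocone is jointly surjective on points.  The paper packages these differently---it forms the fr\"olicher colimit $X$, takes the canonical comparison $f : X \to FM$, observes that $f^{*}$ is a bijection on functionals, and then shows directly that $Lf$ is a bijection of sets---but once both ingredients are established, either route works.

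The genuine gap is in your justification of joint surjectivity for a colimiting cocone $\theta : D \to M$ in $\mathrm{Man}$ (and likewise in $\mathrm{HMan}$).  Your claim that ``for $\mathrm{Man}$ and for the hausdorff colimit one reduces to the same fact'' is not warranted: the forgetful functor $\mathrm{Man}\to\mathrm{Set}$ is not topological, and there is no general reason it should preserve colimits.  The paper supplies precisely the missing argument.  If some $p\in M$ is not in the image of $\theta$, then $M^{\dagger}=M\setminus\{p\}$ is an open submanifold and the cocone $\theta$ factors through it as $\theta^{\dagger}$; the universal property of $\theta$ then produces a smooth $r:M\to M^{\dagger}$ with $r\cdot\theta=\theta^{\dagger}$, whence $i\cdot r\cdot\theta=\theta$ forces $i\cdot r=\mathrm{id}_{M}$, contradicting that the inclusion $i:M^{\dagger}\hookrightarrow M$ is not surjective.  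This retraction trick is the whole content of the surjectivity step, and your proposal does not contain it.

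Your alternative~(b) does not sidestep the problem.  The embedding $Z\hookrightarrow\prod_{\beta}\mathbf{R}$ is indeed a regular monomorphism in $\mathrm{Froe}$, but it is the equalizer of its cokernel pair $\prod_{\beta}\mathbf{R}\rightrightarrows \prod_{\beta}\mathbf{R}\sqcup_{Z}\prod_{\beta}\mathbf{R}$, and that pushout need not be hausdorff (take $Z$ an open interval in $\mathbf{R}$).  So you cannot reduce the equation $g\cdot f=h\cdot f$ to equations of $\mathbf{R}$-valued functionals and invoke the universal property against $\mathbf{R}$; you are back to needing the cocone to be jointly epic on points.  Once you insert the retraction argument, your route~(a) goes through and is essentially a reorganized version of the paper's proof.
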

\begin{proof}
  Let $\theta : M_{\bullet} \to M$ be a colimiting cocone of manifolds or
  hausdorff manifolds, and let $\xi : FM_{\bullet} \to X$ be a colimiting cocone
  in $\mathrm{Froe}$.  The universal property of $\xi$ gives a unique map $f : X
  \to FM$ such that $f \cdot \xi = \theta$.  Since $\theta$ and $\xi$ are both
  colimiting and $C^{\infty}$ preserves limits, the map $f^{*} : C^{\infty}(M)
  \to F_{X}$ is a bijection.  It therefore suffices to show that $Lf$ is a
  bijection.

  For injectivity, suppose that $p$ and $q$ are two points of $X$ such that
  $\eta_{X}(p)$ and $\eta_{X}(q)$ are distinct elements of $LX$.  Then there's
  some functional $\alpha$ on $X$ such that $\alpha(p)$ and $\alpha(q)$ are
  distinct.  Since functionals on $X$ factor uniquely through $f$, this gives a
  functional $\alpha^{\dagger}$ on $M$ such that $\alpha^{\dagger}(f(p))$ and
  $\alpha^{\dagger}(f(q))$ are distinct.  Therefore $f(p)$ and $f(q)$ aren't
  identified in $LFM$, making $Lf$ injective.

  Suppose for contradiction that $f$ isn't surjective, so that there's some $p$
  in $M$ not in the image of $f$.  Let $M^{\dagger} = M \setminus \{p\}$, and
  let $i : M^{\dagger} \to M$ be the inclusion.  The image of $f$ contains the
  image of the cocone $\theta$, so the latter factors as a cocone
  $\theta^{\dagger} : M_{\bullet} \to M^{\dagger}$ such that $\theta = i \cdot
  \theta^{\dagger}$.  By the universal property of the colimiting cocone
  $\theta$, this gives a unique smooth function $r : M \to M^{\dagger}$ such
  that $r \cdot \theta = \theta^{\dagger}$.  But then we have $i \cdot r \cdot
  \theta = \theta$, and since $\theta$ is colimiting it follows that $i \cdot r$
  is the identity of $M$.  This implies that $i$ is surjective, a contradiction.
  So $f$ and therefore $Lf$ are surjective.
\end{proof}

It follows that the hausdorff reflection and the quotient $M/E$ always exist and
coincide as hausdorff fr\"olicher spaces, and so when one is a manifold so is
the other:
\begin{prop}[\cref{thm:quotient}, converse]
  If $M$ is a manifold with a hausdorff reflection, then the quotient $M/E$
  exists.
\end{prop}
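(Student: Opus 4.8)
The plan is to show that the fr\"olicher reflection $LFM$, whose underlying set is exactly the quotient set $M/E$, coincides with $FN$ for the hausdorff reflection $\eta : M \to N$ supplied by hypothesis; transporting the manifold structure of $N$ across this identification then realizes $M/E$ as a manifold and $\eta$ as its quotient map. Both $LFM$ and $FN$ will be obtained as colimits in $\mathrm{HFroe}$ of the same chart diagram, so the bulk of the work is assembling the two colimit presentations and comparing them.

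First I would present $N$ as a colimit of charts. Choosing a countable cover of $M$ by open charts $\{U_{i}\}$ gives the coequalizer (colimiting cocone in $\mathrm{Man}$) $\theta : (\coprod_{i,j} U_{i}\cap U_{j} \rightrightarrows \coprod_{i} U_{i}) \to M$. Since each $U_{i}$ and each $U_{i}\cap U_{j}$ is hausdorff, and disjoint unions of hausdorff manifolds are hausdorff, \cref{cor:refl_colim} makes $\eta\theta$ a colimiting cocone in $\mathrm{HMan}$ with apex $N$. Applying the colimit-preserving inclusion $\mathrm{HMan}\hookrightarrow\mathrm{HFroe}$ of \cref{prop:LF_preserve_colim} exhibits $FN$ as the colimit in $\mathrm{HFroe}$ of the diagram $F(\coprod_{i,j} U_{i}\cap U_{j})\rightrightarrows F(\coprod_{i} U_{i})$.

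Next I would compute $LFM$ as the colimit of the very same diagram. Because $LF$ preserves colimits by \cref{prop:LF_preserve_colim}, and because $LF$ agrees with $F$ on hausdorff manifolds—for a hausdorff manifold $U$ the space $FU$ is already hausdorff by \cref{prop:froe_top}, so its reflection unit is an isomorphism—applying $LF$ to $\theta$ yields a colimiting cocone in $\mathrm{HFroe}$ over the same diagram $F(\coprod_{i,j} U_{i}\cap U_{j})\rightrightarrows F(\coprod_{i} U_{i})$, now with apex $LFM$. Two colimiting cocones over one diagram are canonically isomorphic, so $LFM \cong FN$ in $\mathrm{HFroe}$, compatibly with the cocones out of $\coprod_{i} U_{i}$. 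The underlying set of $LFM$ is the quotient of $M$ by the relation identifying points agreeing under every functional—that is, precisely $M/E$—and the reflection unit $\eta_{FM}$ is the set-theoretic quotient map; under the isomorphism this becomes the surjection $\eta : M \to N$, whose fibers are exactly the $E$-classes.

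Finally I would transport the structure and verify the quotient. The isomorphism $LFM \cong FN$ equips the set $M/E$ with the manifold structure of $N$, and \cref{prop:local_diffeo} shows that $\eta$ induces isomorphisms on tangent spaces, hence is a surjective submersion onto $N$. A surjective submersion is the manifold quotient by its fiber relation, so $N$ realizes the quotient manifold $M/E$: any smooth map constant on $E$-classes factors uniquely and smoothly through $\eta$, by inverting $\eta$ locally and gluing. I expect the main obstacle to be the passage from the abstract fr\"olicher isomorphism to the manifold-level conclusion—specifically, confirming that the surjectivity of $\eta$, which a direct attempt to factor $F\eta$ through $\eta_{FM}$ would not obviously yield (one cannot in general separate a missing point from a dense image by functionals), is exactly what the colimit comparison supplies, and that the smooth structure inherited from $N$ is genuinely the quotient structure on $M/E$ rather than merely some structure on that set.
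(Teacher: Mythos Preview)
Your plan is correct and is essentially the paper's own argument: both present $M$ as a colimit of a hausdorff chart diagram, use \cref{cor:refl_colim} together with the colimit preservation of \cref{prop:LF_preserve_colim} to recognize $N$ and $LFM$ as colimits in $\mathrm{HFroe}$ of the same diagram, conclude $LF\eta : LFM \to N$ is an isomorphism, and finish with \cref{prop:local_diffeo} to see that the resulting surjection $\eta$ is a submersion and hence the manifold quotient by $E$. Your write-up spells out a couple of points the paper leaves implicit (that $LF$ agrees with $F$ on the hausdorff charts, and that the underlying set of $LFM$ is literally $M/E$), but the route is the same.
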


\begin{proof}
  Let $\eta : M \to N$ be a hausdorff reflection for $M$.  Choosing a diagram
  $M_{\bullet}$ of hausdorff manifolds and colimiting cocone $\theta :
  M_{\bullet} \to M$ of manifolds, the cocones $\eta \cdot \theta : M_{\bullet}
  \to N$ and $LF\theta : M_{\bullet} \to LFM$ are colimiting cocones of
  hausdorff fr\"olicher spaces by \cref{cor:refl_colim,prop:LF_preserve_colim},
  so $LF\eta : LFM \to N$ is an isomorphism.  Therefore $\eta : M \to N$ is the
  topological quotient of $M$ by $E$.  Since $\eta$ is a local diffeomorphism
  (\cref{prop:local_diffeo}) it's a submersion, making $\eta : M \to N$ the
  quotient of $M$ by $E$ as a manifold.
\end{proof}

\pdfbookmark{References}{References}
\bibliographystyle{alpha}
\bibliography{references}

\end{document}